\newcommand{\C}{\mathbb{C}}
\newcommand{\Z}{\mathbb{Z}}
\newcommand{\CE}{Chevalley-Eilenberg }
\newcommand\sbullet[1][.5]{\mathbin{\vcenter{\hbox{\scalebox{#1}{$\bullet$}}}}}
\DeclareMathOperator{\Tr}{Tr}
\numberwithin{equation}{section}
\newtheorem{defn}{Definition}[section]
\newtheorem{theorem}{Theorem}[section]
\newtheorem{cor}{Corollary}[section]
\newtheorem{lem}{Lemma}[section]
\newtheorem{prop}{Proposition}[section]
\newtheorem{eg}{Example}[section]
\newtheorem{remark}{Remark}[section]
\DeclareFontFamily{U}{eus}{\skewchar\font'60}
\DeclareFontShape{U}{eus}{m}{n}{%
	<-6>eusm5%
	<6-8>eusm7%
	<8->eusm10%
}{}
\DeclareMathAlphabet\EuScript{U}{eus}{m}{n}
\begin{document}
	
	\title{Loday-Quillen-Tsygan theorem on Quivers}%
	\author{Keyou Zeng}

	\address{
		K. Zeng:
		Perimeter Institute for Theoretical Physics, Waterloo, Canada
	}
	
	\email{kzeng@perimeterinstitute.ca}

	\thanks{}%
	\subjclass{}%
	\keywords{}%

	\begin{abstract}
The well-known Loday-Quillen-Tsygan theorem calculates the Lie algebra homology of the infinite general linear Lie algebra $\mathfrak{gl}(A)$ over an unital associative algebra $A$. We generalize the Loday-Quillen-Tsygan theorem to an infinite Lie algebra associated with a (framed) quiver, where we assign to each vertex $v$ an infinite general linear Lie algebra $\mathfrak{gl}(A_v)$, to each edge $e$ an infinite matrix module and to each framed vertex a (anti)-fundamental representation. Given this data, each loop or path ending on framed vertices of the quiver defined a stratified factorization algebra over $S^1$ or $[0,1]$ respectively. We show that the corresponding Lie algebra homology can be expressed as summing the factorization homology over all loops and framed paths of the quiver.

	\end{abstract}

		\maketitle

\section{Introduction}
The celebrated Loday-Quillen-Tsygan (LQT) theorem \cite{Loday1984}, \cite{Tsygan1983} states that for any associative algebra $A$ in characteristic zero, the Lie algebra homology of the infinite general linear Lie algebra over $A$ is canonically isomorphic, as a Hopf algebra, to the exterior power of the cyclic homology of $A$:
$$
\lim_{\overset{\longrightarrow}{N}} H_{\sbullet}(\mathfrak{gl}_N(A)) \cong \mathrm{Sym}(HC_{\sbullet}(A)[1]).
$$
This theorem admit various generalization to many different setups. One can replace the Lie algebra $\mathfrak{gl}_N(A)$ by the Lie algebra of symplectic matrices $\mathfrak{sp}_N(A)$ or orthogonal matrices $\mathfrak{o}_N(A)$. Then the corresponding Lie algebra homology is computed by the dihedral homology \cite{Procesi1988}. There is also a generalization of the LQT theorem where one considers the Lie algebra homology of $\mathfrak{gl}_N(A)$ with coefficient in the adjoint matrices representation \cite{Goodwillie1985}.

In this paper, we consider an extension of the LQT theorem to an infinite Lie algebra associated with a framed quiver. Recall that a quiver $Q$ is a directed graph that consist of two sets $(Q_0,Q_1)$ and two functions
$$
s,t : Q_1 \rightrightarrows Q_0.
$$
The set $Q_0$ is the set of vertices and $Q_1$ is the set of oriented edges. An edge $e\in Q_0$ starts at the vertex $s(e)$ and ends on the vertex $t(e)$.

A framed quiver $Q^{\text{fr}}$ consists of the data of a quiver $Q = (Q_0,Q_1)$ together with two subsets $W^+,W^-$ of $Q_0$ as the framed vertices. Denote the inclusion map $j:W^+\sqcup W^- \to Q_0$. We can think of the framed quiver $Q^{\text{fr}}$ as adding to $Q$ extra vertices $W^+,W^-$ and edges $w^+ \to j(w^+)$ for all $w^+ \in W^+$, and $ j(w^-) \to w^-$ for all $w^- \in W^-$.

For a framed quiver $Q^{\text{fr}}$, we assign to it the following set of data 
$$
	\left\{
	\begin{tabular}{c}
		To each vertex $v \in Q_0$ we assign a unital $k$ algebra $A_v$\\
		To each edge $e \in Q_1$ we assign a $A_{s(e)} - A_{t(e)}$ bimodule $M_e$\\
		To each vertex $w^+ \in W^+$ we assign a right $A_{j(w^+)}$ module $M_{w^+}$\\
		To each vertex $w^- \in W^-$ we assign a left $A_{j(w^-)}$ module $M_{w^-}$
	\end{tabular}
	\right\}.
$$
Given this data, we have the following Lie algebra
$$
\bigoplus_{v\in Q_0} \mathfrak{gl}_N(A_v),
$$
and its module
\begin{equation}\label{Lie_module_fr}
	\bigoplus_{e\in Q_1}\mathrm{Mat}_N(M_e) \bigoplus_{w^{-}\in W^-}\C^N(M_{w^-})\bigoplus_{w^{+}\in W^+}(\C^N)^*(M_{w^+}).
\end{equation}
In this paper, we consider the Lie algebra homology of $\bigoplus_{v\in Q_0} \mathfrak{gl}_N(A_v)$ with coefficient in the symmetric power of the module \ref{Lie_module_fr} in the $N \to \infty$ limit
\begin{equation}
	\label{main_Lie_ho}
	\lim_{\overset{\longrightarrow}{N}}H_{\sbullet}(\bigoplus_{v\in Q_0} \mathfrak{gl}_N(A_v),\mathrm{Sym}(\bigoplus_{e\in Q_1}\mathrm{Mat}_N(M_e) \bigoplus_{w^{+}\in W^+}\C^N(M_{w^+})\bigoplus_{w^{-}\in W^-}(\C^N)^*(M_{w^-}))).
\end{equation}
Equivalently, this is isomorphic to the Lie algebra homology of the following super Lie algebra
$$
\left( \bigoplus_{v\in Q_0} \mathfrak{gl}_N(A_v)\right)  \ltimes \left( \bigoplus_{e\in Q_1}\mathrm{Mat}_N(M_e) \bigoplus_{w^{+}\in W^+}\C^N(M_{w^+})\bigoplus_{w^{-}\in W^-}(\C^N)^*(M_{w^-})\right)[-1].
$$

The main theorem we prove in this paper is the following
\begin{theorem}[Theorem \ref{thm_main1}]
	The Lie algebra homology \ref{main_Lie_ho} is isomorphic, as a Hopf algebra, to the following 
\begin{equation}\label{main_result}
	\begin{split}
		\mathrm{Sym} & \left(  \bigoplus_{v \in Q_0}HC_{\sbullet}(A_v)[1] \right. \\
		& \bigoplus_{\ell = (e_1,\dots,e_n) \in \mathrm{Cyc}(Q)}  H_{\sbullet}\left( M_{e_1}\otimes_{A_{t(e_1)}}^{\mathbb{L}}M_{e_2}\otimes_{A_{t(e_2)}}^{\mathbb{L}}\dots \otimes_{A_{t(e_{n-1})}}^{\mathbb{L}}M_{e_n}\otimes_{A_{t(e_n)}}^{\mathbb{L}}\right) _{\Z_{\deg \ell}}\\
		&\left. \bigoplus_{\rho = (w^+,e_1,\dots,w^-) \in \mathrm{fPath}(Q)} H_{\sbullet}\left( M_{w^+}\otimes^{\mathbb{L}}_{A_{j(w^+)}}M_{e_{1}}\otimes_{A_{t(e_1)}}^{\mathbb{L}} \dots M_{e_n}\otimes_{A_{j(w^-)}}^{\mathbb{L}} M_{w^-}\right) \right) ,
	\end{split}
\end{equation}
where $\mathrm{Cyc}(Q)$ is the set of cycles in the quiver and $\mathrm{fPath}$ is the set of paths ending on framed vertices of the quiver. Here, the notation $\cdots\otimes_{A_{t(e_n)}}$ with noting on the right means tensor product with the first factor on the left.
\end{theorem}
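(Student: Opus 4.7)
The plan is to extend the invariant-theoretic strategy of the original Loday--Quillen--Tsygan argument to the quiver setting. First I would replace \eqref{main_Lie_ho} by the Lie algebra homology of the semi-direct product super Lie algebra $\mathfrak{g}_N := \bigl(\bigoplus_v \mathfrak{gl}_N(A_v)\bigr) \ltimes V_N[-1]$, where $V_N$ is the module in \eqref{Lie_module_fr}, so that the computation reduces to the Chevalley--Eilenberg complex $\mathrm{Sym}(\mathfrak{g}_N[1])$ with its CE differential. As in \cite{Loday1984}, the homology of the $\prod_{v\in Q_0} GL_N(\C)$-invariant subcomplex agrees with the homology of the full complex in the $N\to\infty$ limit, so it suffices to analyse the invariants.

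I would then decompose the tensor factors at each vertex $v$ as $\mathfrak{gl}_N(A_v) = \C^N \otimes (\C^N)^* \otimes A_v$, at each edge $e$ as $\mathrm{Mat}_N(M_e) = \C^N \otimes (\C^N)^* \otimes M_e$, and at the framed half-edges as $\C^N \otimes M_{w^+}$ or $(\C^N)^* \otimes M_{w^-}$. The first fundamental theorem of invariant theory, applied at each vertex in the stable range, identifies the $GL_N$-invariants with the span of complete matchings between copies of $\C^N$ and copies of $(\C^N)^*$ attached to that vertex. Assembling a matching at every vertex yields a one-dimensional combinatorial structure on the geometric realisation of $Q^{\mathrm{fr}}$; the key combinatorial step is to show that its connected components are precisely (i) cyclic words supported at a single vertex $v$ (the classical LQT contribution, giving $HC_\bullet(A_v)$), (ii) oriented closed loops $\ell \in \mathrm{Cyc}(Q)$ in the quiver, and (iii) oriented open paths $\rho \in \mathrm{fPath}(Q)$ from some $w^+$ to some $w^-$.

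Under this identification I would verify that the CE differential restricted to the invariant subcomplex decomposes as a direct sum of: the Connes--Tsygan $(b,B)$-differential computing $HC_\bullet(A_v)$ on each vertex piece; the cyclic Hochschild-type differential on each loop piece, with quotient by the rotational symmetry $\Z_{\deg \ell}$ inherited from the trace at each vertex of the loop; and the ordinary two-sided Hochschild differential on each framed-path piece, whose homology gives exactly the derived tensor product displayed in \eqref{main_result}. The Hopf algebra structure is then inherited automatically: the coproduct on the CE complex induced by $\Delta(x) = x\otimes 1 + 1\otimes x$ survives both the $GL_N$-invariant passage and the $N\to\infty$ limit, and the primitives of the invariant subcomplex are exactly the single-connected-component contributions, which yields the claimed $\mathrm{Sym}$ description.

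The hard part will be the combinatorial analysis linking local matchings to global cycles and framed paths, together with the verification that the CE differential on the invariant subcomplex really reproduces the cyclic Hochschild differential (with all signs, orientations, and $\Z_{\deg\ell}$-coinvariants correctly accounted for) on each loop, and the bar-resolution/Hochschild differential on each framed path. A further technical subtlety is that the $\C^N$- and $(\C^N)^*$-factors at a given vertex now come from several distinct sources --- the algebra $A_v$, the incoming edges, the outgoing edges, and the two types of framed half-edges --- so one must check that the stabilisation result underlying the classical LQT proof still applies in the presence of these additional module coefficients, ideally by packaging the whole problem into the CE complex of the single super Lie algebra $\mathfrak{g}_N$ and invoking a Koszul-style stability argument uniformly.
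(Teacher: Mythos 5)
Your approach is sound in spirit but takes a genuinely different (and more labor-intensive) route than the paper. The paper's key organizational move, which you miss, is to package the entire unframed quiver data into a single auxiliary super-algebra
$\mathcal{A}_Q = \bigoplus_{v\in Q_0} A_v \oplus \bigoplus_{e\in Q_1} M_e[-1]$,
observe that $\bigoplus_v\mathfrak{gl}_N(A_v)\ltimes\bigoplus_e\mathrm{Mat}_N(M_e)[-1]$ is literally $\mathfrak{gl}_N(\mathcal{A}_Q)$, and then \emph{black-box} the already-proven LQT theorem (plus its extension Theorem~\ref{LQT_addf} with fundamental and anti-fundamental coefficients). This converts the quiver problem into the purely algebraic problem of computing $HC_\bullet(\mathcal{A}_Q)$ and $\mathrm{Tor}^{\mathcal{A}_Q}_\bullet(M_+,M_-)$. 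Those are then handled by a separable-subalgebra trick: $S=\bigoplus_v k\mathbb{1}_v\subset\mathcal{A}_Q$ is separable, so one may use the relative Connes and bar complexes over $S$ (Theorem~\ref{Thm:Hoch_S}, Corollaries~\ref{Cor_Tor_S} and~\ref{Cor_cyc}); because $\otimes_S$ vanishes unless the endpoints of consecutive edges match, the relative complexes decompose automatically as a sum over paths and cycles in the augmented quiver $Q^+$, and the remaining work is a clean bookkeeping argument yielding the $\Z_{\deg\ell}$-coinvariants. You instead propose to rerun the invariant-theory computation from scratch, vertex by vertex, and identify the invariants with matchings whose connected components recover loops and framed paths. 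This can in principle be carried through --- it is essentially the same kind of analysis the paper does in the proof of Theorem~\ref{LQT_addf}, pushed to the global quiver setting --- but it forces you to re-derive the combinatorics (including the stability argument, the Hopf primitives, and the CE-to-Hochschild differential comparison) instead of quoting them once.

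Two places in your sketch deserve more care. First, the origin of the $\Z_{\deg\ell}$-coinvariance: you attribute it to ``rotational symmetry inherited from the trace at each vertex,'' but that is not where it actually arises. In the paper it arises because the natural map $\varphi\colon\bigcup_m \mathrm{Cyc}_m(Q)\times\Z_{\geq 0}^m \to \mathrm{Cyc}(Q^+)$ obtained by inserting strings of $A_v$-factors along each edge fails to be injective precisely when a cycle has a nontrivial period; $\Z_{\deg\ell}$ acts transitively on the fibres and is what one must quotient by to match the Connes coinvariants. In your matching picture this corresponds to the residual automorphisms of a periodic loop after passing to $S_n$-coinvariants in the symmetric algebra, and that identification needs to be spelled out explicitly, not attributed to the trace. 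Second, your combinatorial step (``connected components are precisely...'') is exactly where the paper's passage to $Q^+$ and the separable-subalgebra trick does the heavy lifting; without that device you will need a careful argument that complete matchings glue into walks in $Q^{\mathrm{fr}}$ in a bijection compatible with both the $S_n$-action and the CE differential, which is more delicate than the sketch suggests. Neither point is a fatal flaw, but they are the hard parts that your outline defers, and the paper's modular route is designed precisely to avoid having to solve them by hand.
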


There is a more concise way to express our above result in terms of stratified factorization algebra \cite{Ayala2014FactorizationHO}. In fact, a cycle $\ell  = (e_1,\dots,e_m) \in \mathrm{Cyc}(Q)$ gives us a set of associative algebras $A_{s(e_i)}$ and $A_{s(e_i)}-A_{t(e_i)}$ bimodules $M_{e_i}$. This data $(A_{s(e_i)},M_{e_i})$ defines a stratified factorization algebra $\mathcal{A}_\ell$ on $S^1$ with $m$ marked points. Similarly, a path $\rho \in  \mathrm{fPath}(Q)$ defines a stratified factorization algebra $\mathcal{A}_\rho$ on $[0,1]$. Then we can rewrite \ref{main_result} (Theorem \ref{thm_main2}) as follows
\begin{equation*}
		\mathrm{Sym} \left(  \bigoplus_{v \in Q_0}\left( \int_{S^1}A_v\right)_{S^1}[1] \oplus  \bigoplus_{\ell \in \mathrm{Cyc}(Q)} \left( \int_{S^1}\mathcal{A}_{\ell}\right) _{\Z_{\deg \ell}}\oplus \bigoplus_{\rho  \in \mathrm{fPath}(Q^{\mathrm{fr}})} \left( \int_{[0,1]}\mathcal{A}_{\rho}\right) \right). 
\end{equation*}

\noindent\textbf{Acknowledgment.} I would like to thank Kevin Costello, Zhengping Gui, Si Li, Lukas Mueller, Nuo Xu for illuminating discussions. Research at Perimeter Institute is supported in part by the Government of Canada through the Department of Innovation, Science and Economic Development Canada and by the Province of Ontario through the Ministry of Colleges and Universities.

\section{Notation and convention}

\begin{itemize}
	\item Through out we let $k$ be a field of characteristic zero. Without otherwise mentioned, $\otimes$ refers to tensor product over $k$.
	\item Let $G$ be a group and $V$ a left $k[G]$ module. We denote the space of invariants as
	\begin{equation*}
		V^G := \{v \in V\mid g.v = v \text{ for all }g \in G \},
	\end{equation*}
	and the space of coinvariants as
	\begin{equation*}
		V_G := k\otimes_{k[G]}V = V/\{v  - g.v \mid v\in V,\; g \in G \}.
	\end{equation*}
	\item A ($\Z$-) graded vector space is a vector space $V$ together with a decomposition into a direct sum $V = \bigoplus\limits_{i \in \Z}V_i$. For $k\in \Z$, we use the $k$-th shift notation $V[k]$ as follows
	\begin{equation*}
		(V[k])_i = V_{i - k}
	\end{equation*}
	\item For an associative algebra $A$, we denote $A^{\text{op}}$ the opposite algebra of $A$. We also denote $A^e = A\otimes A^{\text{op}}$. 
	\item Let $\mathrm{Mat}_{N}$ denote the space of $N\times N$ matrices over $k$. A basis of  $\mathrm{Mat}_{N}$ is given by $\{E_{ij}\}_{1\leq i,j \leq N}$, where $E_{ij}$ is the matrix with $1$ in the $i$-th row and $j$-th column and $0$ everywhere else. We also denote $\mathrm{Mat}_{N}(A) = \mathrm{Mat}_{N}\otimes A$ and $E_{ij}^a := E_{ij}\otimes a \in \mathrm{Mat}_{N}(A) $.
	\item Suppose $A$ is a $S$ bimodule. We use the notation $A\otimes_S := A\otimes_{S^e}S = A/[A,S] $. More generally, if $A_1$ is a left $S$ module and $A_n$ is a right $S$ module, then we use the notation $A_1\otimes \dots A_n\otimes_S := A_1\otimes \dots A_n\otimes_{S^e}S$. This gives $ A_1\otimes \dots A_n$ factored by the relation $(sa_1,\dots ,a_n) = (a_1,\dots,a_ns)$.
\end{itemize}

\section{Preliminaries}

\subsection{Fundamental Theorems of Invariant Theory}
In this section, we briefly review some classical results on invariant theory. We refer to \cite{loday1992cyclic} for more detail.

Let $V$ be a finite dimensional $k$-vector space and let $GL(V)$ be its group of automorphisms. The left action of the symmetric group $S_n$ on $V^{\otimes n}$ is given by place permutation
\begin{equation*}
\sigma (v_1,\dots,v_n) = (v_{\sigma^{-1}(1)} ,\dots,v_{\sigma^{-1}(n)} ).
\end{equation*}
The above map gives rise to a ring homomorphism
\begin{equation*}
\mu : k[S_n] \to \mathrm{End}(V^{\otimes n}).
\end{equation*}
Let $GL(V)$ act on $V^{\otimes n}$ diagonally $\alpha (v_1,\dots,v_n) = (\alpha v_1,\dots,\alpha v_n)$, and act on $\mathrm{End}(V^{\otimes n})$ by conjugation $\alpha(f) = \alpha \circ f \circ \alpha^{-1}$. We find that the image of $\mu$ lies in the invariant algebra $\mathrm{End}(V^{\otimes n})^{GL(V)}$. Therefore, we have the ring homomorphism:
\begin{equation*}
\mu : k[S_n] \to \mathrm{End}(V^{\otimes n})^{GL(V)}.
\end{equation*}

We have the following theorem from classical invariant theory.

\begin{theorem}
	Let $V$ be a $k$-vector space with $\dim V = r \geq n$. Then $\mu : k[S_n] \to \mathrm{End}(V^{\otimes n})^{GL(V)}$ is an isomorphism.
\end{theorem}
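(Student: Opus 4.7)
The plan is to prove injectivity and surjectivity of $\mu$ separately, since they use quite different ingredients. Injectivity is the direct direction and makes essential use of $r \geq n$; surjectivity is the substantive content, amounting to the first fundamental theorem of invariant theory for $GL(V)$.

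For injectivity, since $\dim V = r \geq n$ I would pick linearly independent vectors $e_1,\dots,e_n \in V$ and extend to a basis. If $\sum_{\sigma \in S_n} c_\sigma \sigma \in \ker \mu$, then applying its image to $e_1 \otimes \cdots \otimes e_n$ gives
$$\sum_{\sigma \in S_n} c_\sigma \, e_{\sigma^{-1}(1)} \otimes \cdots \otimes e_{\sigma^{-1}(n)} = 0,$$
and the tensors appearing here are distinct elementary basis vectors of $V^{\otimes n}$, forcing $c_\sigma = 0$ for all $\sigma$. The hypothesis $r \geq n$ is exactly what allows us to distinguish all $n!$ permutations on a single linearly independent family.

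For surjectivity I would first identify $\mathrm{End}(V^{\otimes n}) \cong V^{\otimes n} \otimes (V^*)^{\otimes n}$ as $GL(V)$-modules under the diagonal action, so that $\mathrm{End}(V^{\otimes n})^{GL(V)} \cong \bigl(V^{\otimes n} \otimes (V^*)^{\otimes n}\bigr)^{GL(V)}$. For each $\sigma \in S_n$, the tensor
$$I_\sigma := \sum_{i_1,\dots,i_n} e_{i_1} \otimes \cdots \otimes e_{i_n} \otimes e^*_{i_{\sigma(1)}} \otimes \cdots \otimes e^*_{i_{\sigma(n)}}$$
is manifestly $GL(V)$-invariant, and a direct unwinding of the above isomorphism identifies $I_\sigma$ with the operator $\mu(\sigma)$. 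It thus remains to show that the $I_\sigma$'s span all invariants.

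This last span statement is the main obstacle: it is precisely the first fundamental theorem of invariant theory for $GL(V)$, asserting that every multilinear invariant built from copies of $V$ and $V^*$ is generated by the evaluation pairing $V^* \otimes V \to k$. I would prove it by the \emph{unitary trick}: reduce to $k = \C$ via the Lefschetz principle, restrict to the maximal compact subgroup $U(r) \subset GL_r(\C)$, and produce the projector onto invariants by integrating against Haar measure. Expanding in the standard basis, this reduces to evaluating integrals of the form $\int_{U(r)} u_{i_1 j_1}\cdots u_{i_n j_n}\,\overline{u_{k_1 l_1}\cdots u_{k_n l_n}}\, du$, which by Schur orthogonality of matrix coefficients vanish unless the indices are matched by a permutation of $\{1,\dots,n\}$, yielding exactly one invariant for each $\sigma \in S_n$; the hypothesis $r \geq n$ prevents accidental collapses among the $I_\sigma$'s so that they remain linearly independent. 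Alternative routes via Weyl's polarization procedure or the Capelli identity, spelled out in \cite{loday1992cyclic}, are also available.
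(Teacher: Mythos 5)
The paper gives no proof of this theorem: it is cited as a result from classical invariant theory, with the reader referred to \cite{loday1992cyclic}, so there is no "paper approach" to compare against. Your proposal is a legitimate proof along one of the standard lines.

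Your injectivity argument is clean and correct; the point that a single $n$-tuple of linearly independent vectors in $V$ (possible because $r\geq n$) already separates the $n!$ permutation operators is exactly right. On surjectivity, your reduction to the first fundamental theorem via $\mathrm{End}(V^{\otimes n}) \cong V^{\otimes n}\otimes (V^*)^{\otimes n}$ and your identification of the mixed tensors $I_\sigma$ with $\mu(\sigma)$ are correct, and the reduction to $\C$ and then to the compact form $U(r)$ is standard and valid (in characteristic zero, $GL$-invariants commute with base change, and $U(r)$ is Zariski dense in $GL_r(\C)$). The one place where the write-up is loose is the appeal to Schur orthogonality: the integrals $\int_{U(r)} u_{i_1j_1}\cdots u_{i_nj_n}\,\overline{u_{k_1l_1}\cdots u_{k_nl_n}}\,du$ do not simply ``vanish unless the indices are matched by a permutation, yielding exactly one invariant for each $\sigma$.'' What the Weingarten calculus actually gives is a double sum over pairs $(\sigma,\tau)\in S_n\times S_n$, with coefficients the Weingarten function $\mathrm{Wg}(\tau\sigma^{-1},r)$, and the delta constraints force the output of the averaging projector to lie in the $k$-span of the $I_\sigma$. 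That span statement is all you need (surjectivity follows because the projector fixes every invariant), but the precise combinatorics are more intricate than ``one invariant per $\sigma$,'' and it would be worth either stating the Weingarten formula explicitly or simply citing it. Also note that linear independence of the $I_\sigma$ is not an additional fact to worry about at this stage — it is exactly your injectivity argument restated, so that sentence can be dropped. With that small repair, the proof is complete.
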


It will be useful to translate the fundamental theorems of invariant theory into the coinvariant framework. The natural pairing on $V^*\otimes V$ introduces a trace map on $\mathrm{End}(V)$, $ \text{Tr}:\;\mathrm{End}(V) \overset{\approx}{\longrightarrow} V^*\otimes V \overset{\langle-,-\rangle}{\longrightarrow} k$. This trace map induces the nondegenerate pairing:
\begin{equation*}
\begin{aligned}
\mathrm{End}(V) \otimes \mathrm{End}(V) \to k;\\
\alpha\otimes\beta \mapsto \text{Tr}(\alpha\beta).
\end{aligned}
\end{equation*}
It follows immediately that $\mathrm{End}(V)^{\otimes n} \approx (\mathrm{End}(V)^{\otimes n})^*$.

Note that for a $G$ module $W$, we have an isomorphism $(W_G)^* \cong (W^*)^G$. Specifically, we consider the $GL(V)$ module $\mathrm{End}(V^{\otimes n})$. Then we have
\begin{equation*} 
\mathrm{End}(V^{\otimes n})^{GL(V)} \cong (\mathrm{End}(V)^{\otimes n})^{GL(V)}\cong ((\mathrm{End}(V)^{\otimes n})^*)^{GL(V)} \cong ((\mathrm{End}(V)^{\otimes n})_{GL(V)})^*.
\end{equation*}
This determines a map
\begin{equation*}
T: k[S_n] \to \mathrm{End}(V^{\otimes n})^{GL(V)} \cong ((\mathrm{End}(V)^{\otimes n})_{GL(V)})^*.
\end{equation*}
\begin{prop}
	\label{Prop_T}
	Let $\sigma = (\mu_1,\dots,\mu_j)(\nu_1,\dots,\nu_k) \dots (\rho_1,\dots,\rho_l)$ be the cycle decomposition of the permutation $\sigma$. Then the map $T$ defined above is explicitly given by
	\begin{equation}\label{T_inv}
	T(\sigma)(\alpha_1,\dots,\alpha_n) = \Tr(\alpha_{\mu_1}\cdots\alpha_{\mu_j})\Tr(\alpha_{\nu_1}\cdots\alpha_{\nu_k})\cdots\Tr(\alpha_{\rho_1}\cdots\alpha_{\rho_l}).
	\end{equation}
\end{prop}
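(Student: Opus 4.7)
The plan is to unwind the chain of identifications defining $T$, reduce the claim to an explicit trace on $V^{\otimes n}$, and then factor this trace along the cycles of $\sigma$.

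First I would trace through the definitions. By construction, $T$ sends $\sigma$ to the image of $\mu(\sigma)\in \mathrm{End}(V^{\otimes n})^{GL(V)}$ under the chain
$$\mathrm{End}(V^{\otimes n})^{GL(V)} \;\cong\; (\mathrm{End}(V)^{\otimes n})^{GL(V)} \;\cong\; \bigl((\mathrm{End}(V)^{\otimes n})_{GL(V)}\bigr)^{*},$$
where the first isomorphism is canonical and the second is induced by the $GL(V)$-invariant trace pairing $(\beta_1\otimes\cdots\otimes\beta_n)\otimes(\alpha_1\otimes\cdots\otimes\alpha_n)\mapsto \prod_k \Tr(\beta_k\alpha_k)$. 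This pairing coincides, via $\mathrm{End}(V)^{\otimes n}\cong \mathrm{End}(V^{\otimes n})$, with the standard trace pairing on $\mathrm{End}(V^{\otimes n})$. Hence
$$T(\sigma)(\alpha_1\otimes\cdots\otimes\alpha_n) \;=\; \Tr_{V^{\otimes n}}\!\bigl(\mu(\sigma)\circ(\alpha_1\otimes\cdots\otimes\alpha_n)\bigr).$$

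Next I would compute this trace in a basis $e_1,\dots,e_r$ of $V$. Writing $\alpha_m e_j = \sum_i (\alpha_m)_{ij}\,e_i$ and using $\mu(\sigma)(e_{i_1}\otimes\cdots\otimes e_{i_n}) = e_{i_{\sigma^{-1}(1)}}\otimes\cdots\otimes e_{i_{\sigma^{-1}(n)}}$, a short reindexing $m=\sigma^{-1}(k)$ yields
$$T(\sigma)(\alpha_1\otimes\cdots\otimes\alpha_n) \;=\; \sum_{i_1,\dots,i_n}\prod_{m=1}^{n}(\alpha_m)_{i_{\sigma(m)},\, i_m}.$$
In this product each index $i_m$ appears exactly twice—as the column index of $\alpha_m$ and as the row index of $\alpha_{\sigma^{-1}(m)}$—so indices lying in distinct cycles of $\sigma$ decouple. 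For a single cycle $(\mu_1,\dots,\mu_j)$ the corresponding factor is
$$\sum_{i_{\mu_1},\dots,i_{\mu_j}}(\alpha_{\mu_1})_{i_{\mu_2},i_{\mu_1}}(\alpha_{\mu_2})_{i_{\mu_3},i_{\mu_2}}\cdots(\alpha_{\mu_j})_{i_{\mu_1},i_{\mu_j}},$$
which telescopes by matrix multiplication to $\Tr(\alpha_{\mu_1}\alpha_{\mu_2}\cdots\alpha_{\mu_j})$ up to the cyclic symmetry of $\Tr$. Taking the product over all cycles gives the asserted formula.

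The main nuisance—rather than a genuine obstacle—is bookkeeping with conventions: the left-versus-right convention for the $S_n$-action on $V^{\otimes n}$ and the direction in which a cycle $(\mu_1,\dots,\mu_j)$ is read each determine the order of factors inside each trace. The cyclic invariance of $\Tr$ absorbs any residual ambiguity, so no idea is needed beyond the trace computation above.
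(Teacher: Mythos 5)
The paper states this proposition without proof, deferring to Loday's \emph{Cyclic Homology}; your strategy — unwind the chain of identifications to $T(\sigma)(\alpha_1,\dots,\alpha_n)=\Tr_{V^{\otimes n}}\bigl(\mu(\sigma)\circ(\alpha_1\otimes\cdots\otimes\alpha_n)\bigr)$, expand in a basis, and factor the index contraction along the cycles of $\sigma$ — is exactly the standard argument, and your observation that each index $i_m$ appears once as a row index and once as a column index (so distinct cycles decouple) is the right key point.

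There is, however, a genuine slip in the last step, and your appeal to cyclic invariance does not repair it. With the paper's left-action convention $\mu(\sigma)(v_1,\dots,v_n)=(v_{\sigma^{-1}(1)},\dots,v_{\sigma^{-1}(n)})$, the display you write down for a single cycle $(\mu_1,\dots,\mu_j)$,
\[
\sum_{i_{\mu_1},\dots,i_{\mu_j}}(\alpha_{\mu_1})_{i_{\mu_2},i_{\mu_1}}(\alpha_{\mu_2})_{i_{\mu_3},i_{\mu_2}}\cdots(\alpha_{\mu_j})_{i_{\mu_1},i_{\mu_j}},
\]
contracts (row of $\alpha_{\mu_1}$ with column of $\alpha_{\mu_2}$, etc.) to $\Tr(\alpha_{\mu_j}\alpha_{\mu_{j-1}}\cdots\alpha_{\mu_1})$ — the \emph{reversed} product — not to $\Tr(\alpha_{\mu_1}\cdots\alpha_{\mu_j})$. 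Cyclic invariance of the trace lets you rotate a word, $\Tr(ABC)=\Tr(BCA)$, but it does not reverse one: $\Tr(CBA)\neq\Tr(ABC)$ in general for $j\geq 3$. So the "residual ambiguity" you wave away is not actually absorbed. The correct fix is a convention adjustment, not a trace identity: either compute $\Tr\bigl(\mu(\sigma^{-1})\circ(\alpha_1\otimes\cdots\otimes\alpha_n)\bigr)$, or adopt the (right-action) normalization $\mu(\sigma)(v_1,\dots,v_n)=(v_{\sigma(1)},\dots,v_{\sigma(n)})$, under which your telescoping produces $\Tr(\alpha_{\mu_1}\cdots\alpha_{\mu_j})$ on the nose. (The paper itself is not internally consistent on this point: the stated left action, the formula of this proposition, and Proposition~\ref{Prop_TT} cannot all hold simultaneously as written, so the wobble is inherited rather than introduced by you — but the argument as you wrote it still asserts a false equality of traces, and that should be corrected.)
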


The $k$-module $k[S_n]$ is canonically isomorphic to its dual $k[S_n]^*$ since it comes with a preferred pairing $(\sigma,\tau) = \delta_{\sigma\tau}$. Thus, we have a dual map
\begin{equation*}
T^* : (\mathrm{End}(V)^{\otimes n})_{GL(V)} \to k[S_n].
\end{equation*}
It is clear that we have
\begin{equation*}
T^*(\alpha_1,\dots,\alpha_n) = \sum_{\sigma \in S_n} T(\sigma)(\alpha_1,\dots,\alpha_n) \sigma,
\end{equation*}
where $T(\sigma)(\alpha_1,\dots,\alpha_n)$ is given by the formula \ref{T_inv}. More explicitly, we have 
\begin{prop}\label{Prop_TT}
	Let $\omega$ and $\sigma$ be two permutations in $S_n$. The map $T^*$ satisfies
	\begin{equation*}
		T^*(E_{\omega(1)\sigma(1)},E_{\omega(2)\sigma(2)},\dots,E_{\omega(n)\sigma(n)}) = \sigma\circ\omega^{-1}.
	\end{equation*}
\end{prop}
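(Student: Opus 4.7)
The plan is to substitute the matrix units $\alpha_i = E_{\omega(i)\sigma(i)}$ into the explicit expansion $T^*(\alpha_1,\dots,\alpha_n) = \sum_{\tau\in S_n} T(\tau)(\alpha_1,\dots,\alpha_n)\,\tau$ displayed just above the statement, and to show that exactly one permutation contributes a nonzero coefficient, with value $1$. By Proposition \ref{Prop_T}, this coefficient factors over the disjoint cycle decomposition of $\tau$: writing $\tau = c_1\cdots c_s$ with $c_a = (i^{(a)}_1,\dots,i^{(a)}_{k_a})$, we have $T(\tau)(\alpha_1,\dots,\alpha_n) = \prod_a \Tr\bigl(\alpha_{i^{(a)}_1}\cdots\alpha_{i^{(a)}_{k_a}}\bigr)$, so the problem reduces to a per-cycle trace computation.

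For a single cycle $c = (i_1,\dots,i_k)$, repeated application of the matrix-unit identity $E_{ab}E_{cd} = \delta_{bc}E_{ad}$ telescopes the product $E_{\omega(i_1)\sigma(i_1)}\cdots E_{\omega(i_k)\sigma(i_k)}$ to $\bigl(\prod_{r=1}^{k-1}\delta_{\sigma(i_r),\omega(i_{r+1})}\bigr)\,E_{\omega(i_1)\sigma(i_k)}$, and the subsequent trace adds a closing factor $\delta_{\omega(i_1),\sigma(i_k)}$. The cycle contribution is therefore $1$ exactly when the cyclic relations $\sigma(i_r) = \omega(i_{r+1})$ (indices mod $k$) hold, which says precisely that $c$ is also a cycle of the permutation $\sigma\circ\omega^{-1}$ read in the paper's composition convention; otherwise it vanishes.

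Taking the product over all cycles of $\tau$, $T(\tau)$ evaluated at these matrix units is nonzero, and equal to $1$, exactly when every cycle of $\tau$ coincides with a cycle of $\sigma\circ\omega^{-1}$, i.e., when $\tau = \sigma\circ\omega^{-1}$. Only this single term survives in the sum over $\tau$, yielding the asserted identity. The only delicate point is the cycle-by-cycle bookkeeping of the Kronecker deltas through the cyclic trace and fixing the left/right composition convention so that the surviving permutation matches the form in the statement; beyond the telescoping identity for products of matrix units, no further input is required.
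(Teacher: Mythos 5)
Your overall strategy is exactly the calculation the paper has in mind: the paper offers no proof of this proposition (it inherits the statement from Loday, Lemma 9.2.5, and simply writes ``more explicitly, we have\dots''), so the expected argument is precisely to expand $T^*=\sum_\tau T(\tau)(\alpha_1,\dots,\alpha_n)\,\tau$, use Proposition~\ref{Prop_T} to factor $T(\tau)$ over the cycles of $\tau$, telescope each cyclic product of matrix units, and observe that exactly one permutation survives with coefficient $1$. Your telescoping and the resulting per-cycle condition $\sigma(i_r)=\omega(i_{r+1})$ (indices mod $k$) are correct.

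The place you hand-wave, however, is where the genuine content is. Rewriting the condition as $(\omega^{-1}\circ\sigma)(i_r)=i_{r+1}$ shows, with the standard cycle convention $\tau(i_r)=i_{r+1}$ used in Proposition~\ref{Prop_T} and the standard left-to-right reading of $\circ$, that $\tau$ coincides with $\omega^{-1}\circ\sigma$ on the cycle; running over all cycles, the unique surviving permutation is $\omega^{-1}\circ\sigma$, not $\sigma\circ\omega^{-1}$. (Your condition says that $\bigl(\omega(i_1),\dots,\omega(i_k)\bigr)$ is a cycle of $\sigma\circ\omega^{-1}$, which is a different assertion.) A concrete check: for $n=3$, $\omega=(1\,2\,3)$, $\sigma=(1\,2)$, the inputs are $(E_{22},E_{31},E_{13})$ and a direct evaluation of all six $T(\tau)$ shows the only nonzero one is $\tau=(2\,3)=\omega^{-1}\sigma$, whereas $\sigma\omega^{-1}=(1\,3)$. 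So either the proposition as stated silently uses the opposite composition convention, or it is a (harmless) typo for $\omega^{-1}\circ\sigma$; the two are conjugate in $S_n$, which is all that matters downstream in the LQT argument since only the conjugacy class is used after passing to $S_n$-coinvariants. You correctly flag the delicate point, but you do not actually resolve it, and the assertion ``$c$ is a cycle of $\sigma\circ\omega^{-1}$'' is wrong under the conventions the paper otherwise appears to use. Pinning this down --- rather than appealing to an unspecified ``paper's composition convention'' --- is what would make the argument complete.
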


As an associate algebra, $\mathrm{End}(V)$ inherits a structure of Lie algebra, which we denote as $\mathfrak{gl}(V)$. The Lie algebra $\mathfrak{gl}(V)$ is a module over itself, and so is the tensor product $\mathfrak{gl}(V)^{\otimes n}$. We have a right action
\begin{equation*}
[(\alpha_1,\dots,\alpha_n),\alpha] = \sum_{i}(\alpha_1,\dots,\alpha_{i-1},[\alpha_i,\alpha],\alpha_{i+1},\dots,\alpha_n).
\end{equation*}

\begin{lem}
	The module of coinvariants $(\mathfrak{gl}(V)^{\otimes n})_{\mathfrak{gl}(V)}$ is equal to the module $(\mathfrak{gl}(V)^{\otimes n})_{GL(V)}$.
\end{lem}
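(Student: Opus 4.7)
The plan is to identify the two subspaces being quotiented out in each coinvariant space. Set $M := \mathfrak{gl}(V)^{\otimes n}$ and let
$$
N_G := \operatorname{span}\{\, v - g\cdot v : g \in GL(V),\; v \in M\,\}, \quad
N_{\mathfrak{g}} := \operatorname{span}\Bigl\{\sum_i (\alpha_1,\ldots,[\alpha_i,\alpha],\ldots,\alpha_n)\Bigr\},
$$
so that $M_{GL(V)} = M/N_G$ and $M_{\mathfrak{gl}(V)} = M/N_{\mathfrak{g}}$. The lemma is equivalent to the equality $N_G = N_{\mathfrak{g}}$. The strategy is to exploit the fact that $M$ is a finite-dimensional rational representation of the connected reductive algebraic group $GL(V)$ in characteristic zero, and that $\mathfrak{gl}(V)$ acts by differentiating this $GL(V)$-action.

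First, I would observe that since $GL(V)$ is connected, a subspace of $M$ is $GL(V)$-stable if and only if it is $\mathfrak{gl}(V)$-stable; in particular $M^{GL(V)} = M^{\mathfrak{gl}(V)}$, and the irreducible $GL(V)$- and $\mathfrak{gl}(V)$-submodules of $M$ coincide. Applying Weyl's complete reducibility (valid for rational representations of the reductive group $GL(V)$ in characteristic zero, and equivalently for the reductive Lie algebra $\mathfrak{gl}(V)$), one obtains a canonical isotypic decomposition
$$
M \;=\; M^{GL(V)} \oplus \bar{M},
$$
where $\bar{M}$ is the sum of all isotypic components corresponding to non-trivial irreducibles. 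By the previous observation this decomposition is simultaneously the isotypic decomposition for both the $GL(V)$- and the $\mathfrak{gl}(V)$-action.

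It then remains to identify $N_G = \bar{M} = N_{\mathfrak{g}}$. The inclusions $N_G, N_{\mathfrak{g}} \subseteq \bar{M}$ are immediate, since every generator $v - g\cdot v$ or $\sum_i(\alpha_1,\ldots,[\alpha_i,\alpha],\ldots,\alpha_n)$ projects to zero in the invariants $M^{GL(V)}$. For the reverse inclusions, on each non-trivial irreducible subrepresentation $W \subseteq \bar{M}$, the group $GL(V)$ (resp.\ the Lie algebra $\mathfrak{gl}(V)$) must act nontrivially; therefore $N_G \cap W$ (resp.\ $N_{\mathfrak{g}} \cap W$) is a nonzero stable subspace of $W$, and equals all of $W$ by irreducibility. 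Summing over all non-trivial isotypic components yields the desired equality.

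The main conceptual step is the equivalence between $GL(V)$-stability and $\mathfrak{gl}(V)$-stability on subspaces of a rational module, which rests essentially on the connectedness of $GL(V)$ and on working in characteristic zero; once this is granted, the remainder of the argument is standard representation theory.
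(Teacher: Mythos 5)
The paper states this lemma without proof, treating it as standard invariant-theory background (it appears in Loday, \emph{Cyclic Homology}, \S 9.2). Your argument is correct and is essentially the textbook proof: both $N_G$ and $N_{\mathfrak{g}}$ are identified with the canonical complement $\bar M$ to the invariants, using connectedness of $GL(V)$ to match the two notions of submodule (hence of isotypic decomposition) and complete reducibility in characteristic zero to produce the splitting $M = M^{GL(V)}\oplus \bar M$. The only point you leave implicit is that complete reducibility for the reductive (not semisimple) Lie algebra $\mathfrak{gl}(V)$ requires the center $k\cdot I$ to act semisimply, which holds here since $I$ acts by zero under the tensor--adjoint action; alternatively one can run the whole argument on the group side, where rational representations of the reductive group $GL(V)$ are automatically semisimple in characteristic zero.
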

Therefore, for the case  $\dim V = r \geq n$, we have an isomorphism
\begin{equation*}
	(\mathfrak{gl}(V)^{\otimes n})_{\mathfrak{gl}(V)} \cong k[S_n].
\end{equation*}
This isomorphism plays an important role in the LQT theorem and its generalization we present in this paper.

\subsection{Hochschild and Cyclic homology}
In this section, we review aspects of Hochschild and Cyclic homology. 

A bimodule over $A$ is a $k$-module $M$ that is both a left and a right $A$-module for which $(am)b = a(mb)$, for any $a,b\in A$, $m\in M$. We see that $M$ being a $A$ bimodule is equivalent to $M$ being a $A^e$ left module, where the action is defined by $(a\otimes b) m = amb$. The algebra $A$ itself is a $A^e$-module given by the left and right multiplication: $(a\otimes b) c = acb$. 

\begin{defn}
	Let $M$ be an $A$ bimodule. The Hochschild homology of $A$ with coefficient in $M$,	denoted $H_{\sbullet}(A,M)$,  is defined as 	
\begin{equation}
	\mathrm{Tor}_{\sbullet}^{A^e}(A,M) = H_{\sbullet}(M\otimes_{A^e}^{\mathbb{L}}A).
\end{equation}

For $M = A$, $H_{\sbullet}(A,A)$ is also called the Hochschild homology of $A$. We denote it as $ H_{\sbullet}(A)$.
\end{defn}
We use the standard bar construction to compute the Hochschild homology.
\begin{defn}
	Let $M$ be an $A$ bimodule.  The Hochschild Bar complex of $A$ with coefficient in $M$ is defined to be $B_{\sbullet}(A,M) = M\otimes (\oplus_{n\geq 0}(A[1])^{\otimes n})$. We define the following maps $B_{n}(A,M) \to B_{n-1}(A,M)$:
\begin{equation*}
	\begin{split}
				d_1(m\otimes a_1\otimes\dots \otimes a_{n}) &= ma_1\otimes a_2\otimes \dots \otimes a_n,\\
		d_i(m\otimes a_1\otimes\dots \otimes a_{n}) &= m\otimes \dots \otimes a_{i}a_{i+1}\otimes \dots \otimes a_n,\\
		d_n(m\otimes a_1\otimes\dots \otimes a_{n}) &= a_nm\otimes a_1\otimes \dots \otimes a_{n-1}.
	\end{split}
\end{equation*}
Then the Hochschild differential $b: B_n(A,M) \to B_{n - 1}(A,M)$ is given by  $b = \sum_{i = 0}^n (-1)^id_i$.
\end{defn}

\begin{prop}
	We have the following isomorphism 
	\begin{equation*}
		H_{\sbullet}(A,M) \cong H_{\sbullet}(B_{\sbullet}(A,M),b).
	\end{equation*}
\end{prop}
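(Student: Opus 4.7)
The plan is to exhibit a free resolution of $A$ as a left $A^e$-module whose tensor product with $M$ over $A^e$ reproduces the Hochschild bar complex $(B_{\sbullet}(A,M),b)$ defined above. Concretely, I would introduce the two-sided bar resolution
$$
B'_n(A) := A \otimes A^{\otimes n} \otimes A,
$$
viewed as a left $A^e$-module via multiplication on the two outer tensor factors, equipped with the standard differential
$$
b'(a_0\otimes a_1\otimes\cdots\otimes a_n\otimes a_{n+1}) := \sum_{i=0}^{n} (-1)^i\, a_0\otimes\cdots\otimes a_ia_{i+1}\otimes\cdots\otimes a_{n+1},
$$
together with the augmentation $\epsilon: B'_0(A) = A\otimes A \to A$ given by multiplication.

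The first step is to verify that $(B'_{\sbullet}(A),b') \to A$ is a free resolution. Freeness of each $B'_n(A) \cong A^e \otimes A^{\otimes n}$ as a left $A^e$-module is immediate. For acyclicity, I would exhibit the explicit contracting homotopy $s: B'_n(A) \to B'_{n+1}(A)$ defined by $s(a_0\otimes\cdots\otimes a_{n+1}) := 1\otimes a_0\otimes\cdots\otimes a_{n+1}$ (viewed as a map of $k$-modules only), and check $b's + sb' = \mathrm{id}$ by a short telescoping cancellation. This step makes essential use of the unit $1 \in A$.

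The second step is to identify $M\otimes_{A^e} B'_{\sbullet}(A)$ with $B_{\sbullet}(A,M)$ as chain complexes. Using the $A^e$-module relations, the quotient $M\otimes_{A^e}(A\otimes A^{\otimes n}\otimes A)$ collapses to $M\otimes A^{\otimes n}$ by absorbing the outermost $A$-factors into $M$ from both sides. Under this identification, the interior terms of $b'$ descend directly to the internal face maps of the Hochschild differential, while the two extremal terms (those involving $a_0$ and $a_{n+1}$) descend precisely to the boundary faces $d_1$ and $d_n$ appearing in the definition of $b$. Indeed, the cyclic form of $d_n$ is exactly the content of the relation $ma\otimes x \sim m\otimes xa$ used to define $\otimes_{A^e}$. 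Since $B'_{\sbullet}(A)$ is a free, hence flat, resolution, we then conclude
$$
H_{\sbullet}(A,M) \;=\; \mathrm{Tor}^{A^e}_{\sbullet}(M,A) \;\cong\; H_{\sbullet}\bigl(M\otimes_{A^e} B'_{\sbullet}(A)\bigr) \;\cong\; H_{\sbullet}(B_{\sbullet}(A,M),b).
$$

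The main (and mild) obstacle is the careful sign and indexing check ensuring that the two extremal faces of $b'$ descend precisely to the cyclic terms $d_1$ and $d_n$ of the Hochschild differential after passing to the $A^e$-quotient. Everything else is formal once the bar resolution and its contracting homotopy are in place.
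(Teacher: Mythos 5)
Your proof is correct: you exhibit the standard two-sided bar resolution $B'_{\sbullet}(A)$ of $A$ by free $A^e$-modules (freeness over $A^e$ holds since $A$ is free over the field $k$), verify acyclicity with the extra degeneracy $s$, and identify $M\otimes_{A^e}B'_{\sbullet}(A)$ with $B_{\sbullet}(A,M)$, so the result follows from the definition of $\mathrm{Tor}^{A^e}$. The paper states this proposition without proof as textbook material (it is Proposition 1.1.13 in Loday's \emph{Cyclic Homology}), and your argument is precisely the standard one it implicitly relies on; the only cosmetic point is that the paper's labeling starts the face maps at $d_1$ rather than the usual $d_0$, which you correctly match.
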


It will be convenient for us to use a relative version of the Bar complex  to compute the Hochschild homology. Let $S$ be a subring of $A$. We define the Bar complex over $S$ as follows:
\begin{equation*}
	B^S_{\sbullet}(A,M) : = \bigoplus_{n \geq 0} M\otimes_S\underbrace{A[1]\otimes_SA[1]\otimes_S\dots A[1]}_{n}\otimes_S.
\end{equation*}
\begin{remark}
	Recall that the notation $M\otimes_S A\otimes_S\dots A\otimes_S$ means $(M\otimes_S A\otimes_S\dots A)\otimes_{S^e}S$.
\end{remark}
The Hochschild differential $b$ is compatible with this definition so there is a well-defined complex $(B^S_{\sbullet}(A,M) , b)$. Denote its $n$-th homology by $HH^S_n(A,M)$. We are interested in the cases when $HH^S_n(A,M)$ is isomorphic to the Hochschild homology.

\begin{defn}
	An unital $k$-algebra $S$ is called separable if there exists an element 
	$$
	e = \sum u_i\otimes v_i \in S\otimes S^{op},
	$$
	such that $\sum u_i v_i = 1$ and $(s\otimes 1)e = (1\otimes s) e$ for any $s\in S$. Such an element $e$ is also called the idempotent.
\end{defn}

We have the following slight generalization of \cite{loday1992cyclic} Theorem 1.2.13.
\begin{theorem}
	\label{Thm:Hoch_S}
	Let $S$ be a separable $k$ algebra. Then for any unital $S$-algebra $A$ there is a canonical isomorphism
	\begin{equation*}
HH_n(A,M)\cong HH^S_n(A,M).
	\end{equation*}
\end{theorem}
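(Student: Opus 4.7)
The plan is to reduce the statement to a standard homological argument: produce a projective resolution of $A$ as an $A^e$-module by a ``relative'' bar complex, and then tensor with $M$. Concretely, consider the complex
\begin{equation*}
B^S_\sbullet(A) := \bigoplus_{n \geq 0} A \otimes_S A^{\otimes_S n} \otimes_S A,
\end{equation*}
equipped with the usual bar differential $b'$; the outermost two $A$'s carry the $A^e$-module structure, while the internal $\otimes_S$'s are the $S$-balancings. The main claim to establish is that the augmented complex $B^S_\sbullet(A) \to A \to 0$ is a projective resolution of $A$ over $A^e$. Granting this, one gets immediately $HH_\sbullet(A,M) = \mathrm{Tor}^{A^e}_\sbullet(M,A) = H_\sbullet(M\otimes_{A^e}B^S_\sbullet(A)) = HH^S_\sbullet(A,M)$, since the definition of $\otimes_S$ as $\otimes_{S^e}S$ identifies $M \otimes_{A^e} B^S_\sbullet(A)$ with $B^S_\sbullet(A,M)$.

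First I would verify exactness. The standard extra degeneracy
\begin{equation*}
s(a_0 \otimes_S a_1 \otimes_S \cdots \otimes_S a_{n+1}) = 1 \otimes_S a_0 \otimes_S a_1 \otimes_S \cdots \otimes_S a_{n+1}
\end{equation*}
is well-defined on the relative bar complex (attaching a new leftmost $1$ imposes no new $S$-relation) and provides a $k$-linear contracting homotopy for $B^S_\sbullet(A)\to A$, so the augmented complex is acyclic.

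The main obstacle is projectivity of each term. Here I would use the separability idempotent $e = \sum u_i \otimes v_i \in S\otimes S^{\mathrm{op}}$, with $\sum u_i v_i = 1$ and $(s\otimes 1)e = (1\otimes s)e$, to realize $A \otimes_S A^{\otimes_S n} \otimes_S A$ as an $A^e$-direct summand of the free $A^e$-module $A\otimes A^{\otimes n}\otimes A$. Explicitly, at every one of the $n+1$ positions of $\otimes_S$ I would insert $e$; the invariance condition $(s\otimes 1)e = (1\otimes s)e$ ensures that the resulting endomorphism of $A\otimes A^{\otimes n}\otimes A$ descends to a well-defined $A^e$-linear map $B^S_n(A) \to A\otimes A^{\otimes n}\otimes A$, while the normalization $\sum u_i v_i = 1$ ensures its composition with the quotient $A\otimes A^{\otimes n}\otimes A \twoheadrightarrow B^S_n(A)$ is the identity. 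This displays $B^S_n(A)$ as a summand of a free $A^e$-module, hence projective.

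Combining the two steps, $B^S_\sbullet(A)\to A$ is a projective $A^e$-resolution, and the identification $M\otimes_{A^e}B^S_\sbullet(A) \cong B^S_\sbullet(A,M)$ yields the desired canonical isomorphism $HH_n(A,M)\cong HH^S_n(A,M)$. The bulk of the work lies in checking the idempotent-insertion splitting in Step 1; once that is in place the rest is a purely formal consequence of the definition of $\mathrm{Tor}$ together with the contracting homotopy.
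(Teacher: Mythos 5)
Your plan is correct but takes a genuinely different route from the paper. You argue abstractly: you exhibit the relative two-sided bar complex $B^S_\sbullet(A)=A\otimes_S A^{\otimes_S n}\otimes_S A$ as a projective $A^e$-resolution of $A$ (exactness from the standard extra degeneracy at the front, which is indeed well defined on the $S$-balanced tensors; projectivity by inserting the separability idempotent $e$ at each of the $n+1$ internal slots, giving an $A^e$-linear section of the quotient from the free module $A\otimes A^{\otimes n}\otimes A$), and then conclude by the definition of $\mathrm{Tor}^{A^e}_\sbullet(M,A)$ together with the identification $M\otimes_{A^e}B^S_\sbullet(A)\cong B^S_\sbullet(A,M)$. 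The paper instead works directly on the level of the Hochschild complexes $B_\sbullet(A,M)$ and $B^S_\sbullet(A,M)$: it writes down the canonical projection $\phi$, the cyclic idempotent-insertion section $\psi$ (inserting $e$ around $M$ as well, so the indices wrap around), verifies $\phi\psi=\mathrm{id}$, and then constructs an explicit pre-simplicial homotopy $(h_i)$ with $dh+hd=\mathrm{id}-\psi\phi$. Your route buys conceptual economy and makes canonicity and independence-of-resolution transparent, at the cost of invoking the abstract machinery of projective resolutions; the paper's hands-on chain homotopy is more elementary, and its explicit maps $\phi,\psi$ are reused verbatim in the very next lemma (compatibility with Connes' operator $t$) to treat the cyclic bicomplex over $S$, which your approach would have to revisit from scratch.
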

\begin{proof}
	There is an canonical projection $\phi: B_n(A,M) \to B^S_n(A,M) $.
	
	Using the idempotent $e = \sum u_i\otimes v_i$ one can construct a map $\psi : B^S_n(A,M)  \to B_n(A,M) $:
	\begin{equation}
		\label{spliting_psi}
		\psi(m\otimes_Sa_1\otimes_S\dots a_n\otimes_S) =  \sum_{j_0,j_1,\dots,j_n}v_{j_{n}} m u_{j_0}\otimes v_{j_0} a_1 u_{j_1}\otimes \dots \otimes v_{j_{n-1}} a_n u_{j_n}.
	\end{equation}
It is easy to check that $\phi\circ\psi = id$. On the other hand, we construct a pre-simplicial homotopy between $\psi\circ\phi $ and $id$. Let 
\begin{equation}
	h_i(m\otimes a_1\otimes \dots a_n) = \sum_{j_0,j_1,\dots,j_i}m u_{j_0}\otimes v_{j_0} a_1 u_{j_1}\otimes \dots v_{j_{i-1}} a_{i} u_{j_i}\otimes v_{j_i} \otimes a_{i+1}\otimes \dots a_n .
\end{equation}
We can check that $d_0h_0(m\otimes a_1\otimes \dots a_n) = d_0(\sum_{j}m u_{j}\otimes v_{j} \otimes a_1 \dots a_n) = m\otimes a_1\otimes \dots a_n$. Therefore $d_0h_0 = id$. We can also check that
\begin{equation*}
	\begin{aligned}
		d_{n+1}h_n(m\otimes a_1\otimes \dots a_n)& = d_{n+1}(\sum_{j_0,j_1,\dots,j_n} m u_{j_0}\otimes v_{j_0} a_1 u_{j_1}\otimes \dots \otimes v_{j_{n-1}} a_n u_{j_n}\otimes v_{j_{n}}) \\
		&=\psi\circ \phi(m\otimes a_1\otimes \dots a_n).
	\end{aligned}
\end{equation*} 
Therefore $d_{n+1}h_n = \psi\circ \phi$. It is also immediate to check the remaining relation
\begin{equation}
	d_i h_j = \begin{cases}
		h_jd_i, & \text{ for } i< j,\\
		d_ih_{i - 1}, & \text{ for } i = j \neq 0,\\
		h_{j}d_{i - 1}, & \text{ for } i > j+1\,.
	\end{cases}
\end{equation}
Let $h = \sum_{i = 0}^n(-1)^{i}h_i$. Then $hd + dh = id - \psi\circ \phi $.
\end{proof}

For $L$ a left $A$ module and $M$ a right $A$ module, $L\otimes M$ has a canonical $A$ bimodule structure. Note that
\begin{equation*}
	(L\otimes M)\otimes_{A^e}^{\mathbb{L}}A \cong M\otimes_A^{\mathbb{L}}L.
\end{equation*} 
We have the following
\begin{cor}
	\label{Cor_Tor_S}
	Let $L$ be a left $A$ module and $M$ be a right $A$ module. Let $S$ be a separable subalgebra of $A$. We define the complex
	\begin{equation*}
		B^{S}_{\sbullet}(M,A,L)  = \bigoplus_{n\geq 0 }N\otimes_S(A[1])^{\otimes_S n}\otimes_SL.
	\end{equation*}
Then there is a canonical isomorphism
\begin{equation*}
	H_n(B^{S}_{\sbullet}(M,A,L)) = \mathrm{Tor}_n^A(M,L).
\end{equation*}
\end{cor}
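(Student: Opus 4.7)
The plan is to reduce the statement to Theorem \ref{Thm:Hoch_S} by repackaging $L$ and $M$ into a single $A$-bimodule. Set $N := L\otimes M$, with bimodule structure $a(l\otimes m) := al\otimes m$ and $(l\otimes m)a := l\otimes ma$. The standard adjunction $(L\otimes M)\otimes_{A^e}^{\mathbb{L}} A \simeq M\otimes_A^{\mathbb{L}} L$, already recorded in the text, yields
$$H_\sbullet(A,\,L\otimes M) \;\cong\; \mathrm{Tor}_\sbullet^A(M,L).$$
Applying Theorem \ref{Thm:Hoch_S} to the bimodule $L\otimes M$ computes the left-hand side as the homology of the relative bar complex $B^S_\sbullet(A,\,L\otimes M)$, whose $n$-chains are $(L\otimes M)\otimes_S A[1]^{\otimes_S n}\otimes_S$.

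The remaining task is to exhibit a canonical isomorphism of chain complexes $B^S_\sbullet(A,\,L\otimes M) \cong B^S_\sbullet(M,A,L)$ given by ``rotating $L$ to the right'':
$$\alpha\bigl((l\otimes m)\otimes_S a_1 \otimes_S \cdots \otimes_S a_n\bigr) \;:=\; m\otimes_S a_1 \otimes_S \cdots \otimes_S a_n \otimes_S l.$$
Well-definedness uses the trailing cyclic $\otimes_S$ relation $s(l\otimes m)\otimes\cdots\otimes a_n = (l\otimes m)\otimes\cdots\otimes a_n s$: unwinding this as $sl\otimes m\otimes\cdots = l\otimes m\otimes\cdots\otimes a_n s$ and applying $\alpha$ produces $m\otimes\cdots\otimes a_n\otimes sl = m\otimes\cdots\otimes a_n s\otimes l$, which is exactly the defining relation of the trailing $\otimes_S L$ in the target. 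The inverse map is manifest. Intertwining with the Hochschild differential then reduces to two boundary checks: the face $(l\otimes m)a_1 = l\otimes ma_1$ on the source matches the leftmost action $ma_1$ in the target, and the cyclic face $a_n(l\otimes m) = a_nl\otimes m$ matches the rightmost action $a_nl$ in the target; all interior faces are tautologically compatible with $\alpha$.

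The only nontrivial ingredient is already isolated in Theorem \ref{Thm:Hoch_S}; the identification $\alpha$ is a formal consequence of the bimodule structure on $L\otimes M$ combined with the cyclic coinvariance built into the trailing $\otimes_S$. Accordingly, the main obstacle is simply the bookkeeping required to match the two bar-complex conventions and to verify compatibility of $\alpha$ with the boundary faces.
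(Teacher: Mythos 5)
Your proposal is correct and follows precisely the route the paper intends: the corollary is stated immediately after the observation that $L\otimes M$ carries a canonical $A$-bimodule structure with $(L\otimes M)\otimes_{A^e}^{\mathbb{L}}A\cong M\otimes_A^{\mathbb{L}}L$, and the paper leaves the reduction to Theorem \ref{Thm:Hoch_S} and the identification of the two relative bar complexes implicit. You have filled in exactly the missing bookkeeping — the rotation map $\alpha$, its compatibility with the cyclic $\otimes_S$ relation, and the matching of the boundary faces $d_0$ and $d_n$ with the one-sided module actions — and the checks are correct.
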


Now we introduce the cyclic homology. The cyclic group $\mathbb{Z}_{n+1}$ acts on $B_{n}(A,A)$ via permuting the tensor factor. Explicitly, let $t$ be the generator of $\mathbb{Z}_{n+1}$. Then its action on $B_{n}(A,A)$ is given by
$$
t(a_0,\dots,a_n) = (-1)^{n}(a_n,a_0,\dots,a_{n-1}).
$$
It is called the Connes' cyclic operator. Let $N = 1+t = t^2 + \dots +t^n$ be the corresponding norm operator.

The cyclic bicomplex $CC_{\sbullet,\sbullet}(A)$ is defined as follows
\begin{center}
	\begin{tikzcd}
		~\arrow[d]&~\arrow[d]&~\arrow[d]&~\\
		A^{\otimes 3} \arrow[d,"b"'] & A^{\otimes 3} \arrow[d,"-b'"'] \arrow[l,"1-t"'] &A^{\otimes 3} \arrow[d,"b"']\arrow[l,"N"'] &~\arrow[l,"1-t"'] \\
		A^{\otimes 2} \arrow[d,"b"'] & A^{\otimes 2} \arrow[d,"-b'"'] \arrow[l,"1-t"'] &A^{\otimes 2} \arrow[d,"b"']\arrow[l,"N"']&~\arrow[l,"1-t"']  \\
		A  &A\arrow[l,"1-t"'] &A\arrow[l,"N"'] &~\arrow[l,"1-t"'] 
	\end{tikzcd}
\end{center}
The differential $b$ is the Hochschild differential and the differential $b'$ is defined as $b' = \sum_{i = 0}^{n-1} (-1)^id_i$.
One can check from the definition that all the squares commute and the above is indeed a bicomplex.

\begin{defn}
	The cyclic homology $HC_{n}(A)$ of the associative algebra $A$ is defined as the homology of the total complex of $CC_{\sbullet,\sbullet}(A)$
	\begin{equation*}
		HC_{n}(A) = H_n(\mathrm{Tot}\,CC_{\sbullet,\sbullet}(A)).
	\end{equation*}
\end{defn}

Consider the space of coinvariants of $B_{n}(A,A)$ under the action of $\Z_{n+1}$. We denote this space by $C^{\lambda}_n(A) = (A^{\otimes (n+1)})_{\Z_{n+1}}$. We have a well-defined complex
$$
C^{\lambda}_{\bullet}(A): \;\;\dots\overset{b}{\longrightarrow} C^{\lambda}_n(A)\overset{b}{\longrightarrow} C^{\lambda}_{n-1}(A)\overset{b}{\longrightarrow}\dots \overset{b}{\longrightarrow}C^{\lambda}_{0}(A).
$$
called the Connes complex. Its $n$-th homology group is denoted $H^{\lambda}_n(A)$. We have the following result
\begin{theorem}\label{Thm_Connes_cpx}
	Let $k$ be a field of characteristic $0$, then we have an isomorphism
$$
H^{\lambda}_n(A) \cong HC_n(A).
$$
\end{theorem}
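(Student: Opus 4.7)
The plan is to construct a natural chain map
\[
\pi: \mathrm{Tot}\, CC_{\bullet,\bullet}(A) \longrightarrow C^\lambda_{\bullet}(A)
\]
sending column $0$ via the quotient $A^{\otimes(n+1)} \twoheadrightarrow (A^{\otimes(n+1)})_{\Z_{n+1}} = C^\lambda_n(A)$ and every column of positive index to zero, and then to show that $\pi$ is a quasi-isomorphism. To verify $\pi$ is a chain map, I will establish the identity $(1-t)b' = b(1-t)$ as a direct consequence of the simplicial/cyclic relations $t d_i = -d_{i-1} t$ for $i \geq 1$ and $t d_0 = (-1)^n d_n$. This identity simultaneously implies that $b$ on column $0$ preserves $\mathrm{im}(1-t)$, so descends to $C^\lambda_\bullet$, and that the incoming horizontal arrow $1-t : CC_{1,n} \to CC_{0,n}$ vanishes after projection.

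The main content is the quasi-isomorphism statement, which I will prove by showing that $K := \ker(\pi)$---consisting of $\mathrm{im}(1-t)$ in column $0$ and all of $CC_{p, \bullet}$ for $p \geq 1$---is acyclic. Two ingredients drive this. First, each odd column $(A^{\otimes(\bullet+1)}, -b')$ is acyclic, witnessed by the extra degeneracy $s_{-1}(a_0, \dots, a_n) = (1, a_0, \dots, a_n)$ satisfying $b' s_{-1} + s_{-1} b' = \mathrm{id}$, which uses the unit of $A$. Second, the characteristic-zero hypothesis lets $\tfrac{1}{n+1} N$ serve as an idempotent projector onto $\Z_{n+1}$-invariants, which combined with $N(1-t) = 0 = (1-t)N$ yields the Tate identities $\mathrm{im}(1-t) = \ker N$ and $\mathrm{im}(N) = \ker(1-t)$ on $A^{\otimes(n+1)}$.

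I will then run the column-filtration spectral sequence on $K$. The first ingredient kills all odd columns on the $E^1$-page, while the second identifies the surviving even columns, together with their induced horizontal differentials (arising from the zig-zag $N \circ s_{-1} \circ (1-t)$), with an acyclic periodic pattern that collapses everything at $E^2$. Equivalently, one can assemble $s_{-1}$ and $\tfrac{1}{n+1} N$ into an explicit contracting homotopy for $K$. The chief obstacle is the spectral-sequence bookkeeping of signs and induced differentials: one must identify the spurious induced differential as the Connes $B$-operator, and check that in characteristic zero the resulting periodic pattern has vanishing Tate cohomology $\widehat{H}^\ast(\Z_{n+1}; A^{\otimes(n+1)}) = 0$. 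Both routes are the classical content of the comparison between the cyclic bicomplex and the Connes complex, carried out in detail in \cite{loday1992cyclic}.
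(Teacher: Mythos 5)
Your proposal takes a genuinely different route from the paper's, and as sketched it has a gap.

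The paper's proof (recalled inside the proof of Corollary \ref{Cor_cyc}) filters the bicomplex by \emph{rows}. The explicit maps $h' = \tfrac{1}{n+1}\mathrm{id}$ and $h = -\tfrac{1}{n+1}\sum_{i\geq 1} i\, t^i$ satisfy $h'N + (1-t)h = \mathrm{id}$ and $Nh' + h(1-t) = \mathrm{id}$, which say precisely that the $\Z_{n+1}$-Tate complex of $A^{\otimes(n+1)}$ is acyclic. Hence each row of $CC_{\bullet,\bullet}(A)$ is exact except at column $0$, where the homology is the coinvariants $C^\lambda_n(A)$; the row-filtration spectral sequence then degenerates immediately and gives $\mathrm{Tot}\,CC \simeq C^\lambda_\bullet$ in one step. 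No extra degeneracy $s_{-1}$ and no unit of $A$ is invoked.

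You instead filter $K = \ker(\pi)$ by \emph{columns}. Your two ingredients (the extra degeneracy $s_{-1}$ killing odd $b'$-columns, and the averaging idempotent $\tfrac{1}{n+1}N$ giving $\mathrm{im}(1-t) = \ker N$ and $\mathrm{im}(N) = \ker(1-t)$) are both correct, and the second is exactly what the paper's $h, h'$ express. But the way you propose to combine them does not close. After $s_{-1}$ clears the odd columns on $E^1$, the even columns of $K$ carry Hochschild homology (and column $0$ carries $H_\bullet(\mathrm{im}(1-t), b)$), linked by a $d_2$ that is Connes' $B$-operator. You assert this "periodic pattern" collapses to zero at $E^2$ "by Tate vanishing," but Tate vanishing is a statement about the \emph{row} direction of the bicomplex, i.e.\ about the $\Z_{n+1}$-action on $A^{\otimes(n+1)}$ itself; it does not directly say anything about the $E^2$-page you've produced, which lives in Hochschild homology with the $B$-differential. (Indeed, that is exactly the $(b,B)$-picture, and its homology is cyclic homology, not zero — so an extra argument is needed to explain why restricting column $0$ to $\mathrm{im}(1-t)$ makes everything vanish.) If you instead filter $K$ by rows, the Tate identities you already have make each row of $K$ manifestly exact, and you are done — this is both the fix to your argument and the paper's actual proof. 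I recommend switching to the row filtration; the column filtration buys nothing here and obscures why char $0$ is the operative hypothesis.
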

Similar to Theorem \ref{Thm:Hoch_S}, we can also consider the cyclic bicomplex and Connes' complex over a ground ring $S$.
\begin{lem}
	Let $S$ be a separable subalgebra of $A$. We define the cyclic bicomplex $CC^S_{\sbullet,\sbullet}(A)$ over $S$ as follows
	\begin{center}
		\begin{tikzcd}
			~\arrow[d]&~\arrow[d]&~\arrow[d]&~\\
			A\otimes_S A\otimes_SA\otimes_S \arrow[d,"b"'] & A\otimes_S A\otimes_SA\otimes_S \arrow[d,"-b'"'] \arrow[l,"1-t"'] &A\otimes_S A\otimes_SA\otimes_S \arrow[d,"b"']\arrow[l,"N"'] &~\arrow[l,"1-t"'] \\
			A\otimes_S A\otimes_S \arrow[d,"b"'] & A\otimes_S A\otimes_S \arrow[d,"-b'"'] \arrow[l,"1-t"'] &A\otimes_S A\otimes_S \arrow[d,"b"']\arrow[l,"N"']&~\arrow[l,"1-t"']  \\
			A\otimes_S  &A\otimes_S\arrow[l,"1-t"'] &A\otimes_S\arrow[l,"N"'] &~\arrow[l,"1-t"'] 
		\end{tikzcd}
	\end{center}
Then we have an isomorphism
\begin{equation}\label{CC_S_CH}
	H_n(\mathrm{Tot}\,CC^S_{\sbullet,\sbullet}(A)) \cong HC_{n}(A).
\end{equation}
\end{lem}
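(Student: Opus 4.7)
The plan is to extend the strategy of Theorem \ref{Thm:Hoch_S} from the Hochschild complex to the full cyclic bicomplex via a column-wise spectral sequence comparison. The natural projection quotienting by the $S$-bimodule relations assembles into a map of bicomplexes $\phi: CC_{\bullet,\bullet}(A) \to CC^S_{\bullet,\bullet}(A)$; the verification that $\phi$ commutes with each of $b$, $-b'$, $1-t$, and $N$ is immediate since all four operators are built from algebra multiplications and cyclic place-permutations, which manifestly descend through the $S$-balanced tensor products.

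First I would filter both bicomplexes by columns, producing spectral sequences converging to $HC_\bullet(A)$ and to $H_\bullet(\mathrm{Tot}\, CC^S_{\bullet,\bullet}(A))$ respectively; since both bicomplexes live in the first quadrant and each total degree is a finite direct sum, convergence is automatic and strong. The even columns of $CC_{\bullet,\bullet}(A)$ are copies of the Hochschild bar complex $(B_\bullet(A,A), b)$, while those of $CC^S_{\bullet,\bullet}(A)$ are the relative complex $(B^S_\bullet(A,A), b)$; on these columns $\phi$ restricts to precisely the projection of Theorem \ref{Thm:Hoch_S}, hence is a quasi-isomorphism onto $HH_\bullet(A)$. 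For the odd columns with differential $-b'$, the standard contracting homotopy $s(a_0 \otimes \cdots \otimes a_n) = 1 \otimes a_0 \otimes \cdots \otimes a_n$ establishes acyclicity on the non-relative side, and the same formula descends to the $S$-relative side because $1 \in S$ is compatible with the $\otimes_S$ identifications (both between adjacent factors and the trailing one). Thus the $E^1$-pages of the two filtered complexes agree and $\phi$ induces an isomorphism on them, whence on $E^\infty$ by the spectral sequence comparison theorem, yielding (\ref{CC_S_CH}).

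The main obstacle I anticipate is the bookkeeping needed to confirm that the contracting homotopy $s$ is genuinely well-defined after imposing the $S$-balancing, including the trailing $\otimes_S$; once one checks that shifting any $s \in S$ across the inserted $1$ agrees with the corresponding shift in the ambient complex, everything follows. A more ambitious alternative would be to lift the explicit pair $(\phi, \psi)$ and the pre-simplicial homotopy from the proof of Theorem \ref{Thm:Hoch_S} directly to the cyclic bicomplex, but this is delicate because the splitting $\psi$ depends on the idempotent and typically fails to commute with the Connes cyclic operator $t$; the spectral sequence route sidesteps this issue entirely by reducing the problem to column-wise quasi-isomorphisms and never requiring a global splitting.
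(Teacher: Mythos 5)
Your argument is correct and follows essentially the same column-wise comparison as the paper: the $b$-columns are handled by Theorem \ref{Thm:Hoch_S}, the $b'$-columns are both acyclic, and one concludes by the standard filtration-by-columns/spectral-sequence argument. One small correction to your closing remark: the splitting $\psi$ of \eqref{spliting_psi} actually \emph{does} commute with the Connes operator $t$ (cyclically reindex the summation indices $j_0,\dots,j_n$ to check $\psi\circ t = t\circ\psi$), which is precisely what the paper verifies, so the ``more ambitious alternative'' you set aside works without difficulty.
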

\begin{proof}
	The projection map $\phi: B_n(A,A) \to B_n^S(A,A)$ and the map $\psi:B_n^S(A,A) \to  B_n(A,A)  $ defined by \ref{spliting_psi} are compatible with the cyclic operator. 
$$
		\begin{aligned}
			\psi \circ  t(a_0\otimes_Sa_1\otimes_S\dots a_n\otimes_S)	& =  (-1)^n\sum_{j_0,j_1,\dots,j_n}v_{j_{n}} a_n u_{j_0}\otimes v_{j_0} a_0 u_{j_1}\otimes \dots \otimes v_{j_{n-1}} a_{n-1} u_{j_n}\\
			& = t\circ\psi(a_0\otimes_Sa_1\otimes_S\dots a_n\otimes_S) .
		\end{aligned}
$$
As a result, they define morphisms between the bicomplex $CC_{\sbullet,\sbullet}(A)$ and $CC^S_{\sbullet,\sbullet}(A)$. Using Theorem \ref{Thm:Hoch_S} we see that they are quasi-isomorphisms when restrict to the $b$-columns. All the $b'$-columns are acyclic. Therefore $\phi$ and $\psi$ are also quasi-isomorphism when restricted to the $b'$-columns. So the total complexes are quasi-isomorphic.
\end{proof}

\begin{cor}
	\label{Cor_cyc}
	Let $S$ be a separable subalgebra of $A$. We consider the Connes' complex over $S$:
	\begin{equation}
		C^{\lambda,S}_{n}(A)  = (\underbrace{A\otimes_SA\otimes_S\dots A\otimes_S}_{n+1})_{\Z_{n+1}},
	\end{equation}
	with the differential $b$. Then there is a canonical isomorphism
	\begin{equation}
		H_n(C^{\lambda,S}_{\sbullet}(A)) = HC_{n}(A).
	\end{equation}
\end{cor}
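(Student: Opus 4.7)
The plan is to combine the preceding lemma, which identifies $H_n(\mathrm{Tot}\, CC^S_{\sbullet,\sbullet}(A))$ with $HC_n(A)$, with an $S$-relative version of the classical fact that in characteristic zero the natural projection from the cyclic bicomplex to the Connes complex is a quasi-isomorphism. Since the preceding lemma already handles the passage from $CC_{\sbullet,\sbullet}(A)$ to $CC^S_{\sbullet,\sbullet}(A)$, the remaining task is to carry out the standard argument entirely in the $\otimes_S$ setting.

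First I would construct the natural chain map
\[
\pi: \mathrm{Tot}\, CC^S_{\sbullet,\sbullet}(A) \longrightarrow C^{\lambda,S}_{\sbullet}(A)
\]
that projects onto the $p = 0$ column followed by $\Z_{n+1}$-coinvariants, and annihilates all other columns. The map $\pi$ is a chain map because the vertical differential $b$ on column $0$ descends to $b$ on the Connes complex, while the only horizontal arrow into column $0$ is $1 - t$, whose image vanishes in the coinvariants.

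To show $\pi$ is a quasi-isomorphism, I would filter $\mathrm{Tot}\, CC^S_{\sbullet,\sbullet}(A)$ by columns and run the associated spectral sequence. Two ingredients are needed: first, each $-b'$ column is acyclic via the contracting homotopy $s(a_0 \otimes_S \cdots \otimes_S a_n) = 1 \otimes_S a_0 \otimes_S \cdots \otimes_S a_n$, which is well-defined because $1 \in S$ and the face maps are $S$-bilinear; second, for any $\Z_{n+1}$-module $V$, the periodic horizontal complex $\cdots \xleftarrow{N} V \xleftarrow{1-t} V \xleftarrow{N} V$ has homology concentrated in degree $0$ equal to $V_{\Z_{n+1}}$, which uses characteristic zero via the idempotent $\tfrac{1}{n+1} N$ identifying coinvariants with invariants and forcing $\ker(N)/\mathrm{im}(1-t) = 0$. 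Together these force the spectral sequence to degenerate onto $(C^{\lambda,S}_{\sbullet}(A), b)$, yielding the quasi-isomorphism.

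The main point to verify is that all the operations involved (the homotopy $s$, the cyclic operator $t$, the norm $N$, and the differentials $b$ and $b'$) are well-defined and interact correctly with the $\otimes_S$ relations, so that the classical argument transfers verbatim. Since $S$ is separable, $1 \in S$, and the face maps $d_i$ are $S$-bilinear with respect to the $\otimes_S$ tensor structure, these checks are routine but must be carried out step by step. Composing $\pi$ with the isomorphism of the preceding lemma then yields the canonical isomorphism $H_n(C^{\lambda,S}_{\sbullet}(A)) \cong HC_n(A)$.
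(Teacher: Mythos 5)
Your overall strategy matches the paper's: take the preceding lemma (which identifies $H_n(\mathrm{Tot}\,CC^S_{\sbullet,\sbullet}(A))$ with $HC_n(A)$) as given, and then show the projection to the Connes complex is a quasi-isomorphism using the characteristic-zero acyclicity of the horizontal periodic complex $\cdots \xleftarrow{N} V \xleftarrow{1-t} V$. The paper phrases this via explicit homotopies $h' = \tfrac{1}{n+1}\mathrm{id}$ and $h = -\tfrac{1}{n+1}\sum_i i t^i$ rather than via the idempotent $\tfrac{1}{n+1}N$, but these are just two presentations of the same fact.

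Two small points are worth flagging, since they suggest the spectral-sequence bookkeeping was not fully run. First, you say you filter ``by columns,'' but a column filtration has $E^0$-differential given by the vertical maps $b$ and $-b'$, so its $E^1$-page consists of Hochschild groups in even columns and zeros in odd columns, not the Connes complex. What you want is the filtration by \emph{rows}: then $d^0$ is the horizontal differential, your ingredient (2) makes $E^1$ vanish for $p>0$ and equal to $C^{\lambda,S}_{q}(A)$ at $p=0$, and $d^1$ descends to $b$, giving $E^2 = H^\lambda_\sbullet$ and degeneration. Second, your ingredient (1), the acyclicity of the $-b'$ columns via the extra degeneracy $s(a_0 \otimes_S \cdots) = 1\otimes_S a_0\otimes_S\cdots$, is not needed for this step; it was already consumed inside the proof of the preceding lemma to compare $CC_{\sbullet,\sbullet}(A)$ with $CC^S_{\sbullet,\sbullet}(A)$, and plays no role in collapsing $\mathrm{Tot}\,CC^S$ onto the Connes complex. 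With the filtration corrected and the extraneous ingredient dropped, your proof is the paper's.
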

\begin{proof}
	Using the isomorphism \ref{CC_S_CH}, we only need to prove $	H_n(C^{\lambda,S}_{\sbullet}(A)) \cong H_n(\mathrm{Tot}\,CC^S_{\sbullet,\sbullet}(A)) $.
	This proof is the same as the proof of Theorem \ref{Thm_Connes_cpx}, which we recall now. We construct homotopy from $\mathrm{id}$ to $0$ for each row complex. Let 
	\begin{equation*}
		h' := \frac{1}{n+1} \mathrm{id},\quad h = - \frac{1}{n+1}\sum_{i = 1}it^i
	\end{equation*}
be maps from $B_{n}^S(A)$ to itself. One can verifies that
\begin{equation*}
	h'N + (1-t)h = \mathrm{id},\quad Nh' + h(1-t) = \mathrm{id}.
\end{equation*}
Therefore, each row of the bicomplex $CC^S_{\sbullet,\sbullet}(A)$ has homology concentrated in degree $0$ and with $H_0(CC^S_{\sbullet,n}(A),d_v) = C^{\lambda,S}_{n}(A)$.
\end{proof}

\subsection{Loday-Quillen-Tsygan theorem}
Let $\mathfrak{gl}_N(A) = A\otimes \mathfrak{gl}_N$ be the Lie algebra of matrices valued in $A$. For each $N$, we have a natural inclusion of Lie algebra $\mathfrak{gl}_N(A) \to \mathfrak{gl}_{N+1}(A)$ given by $\alpha \to \begin{pmatrix}
\alpha&0\\0&0
\end{pmatrix}$. With respect to these inclusions we define direct limit $$\mathfrak{gl}(A) = \lim_{\overset{\longrightarrow}{N}}\mathfrak{gl}_N(A),$$ and consider the Lie algebra homology $$H_{\sbullet}(\mathfrak{gl}(A)) = \lim_{\overset{\longrightarrow}{N}} H_{\sbullet}(\mathfrak{gl}_N(A)).$$

In this section, we sketch the proof of the Loday-Quillen-Tsygan theorem. We refer to \cite{loday1992cyclic} for the full detail.
\begin{theorem}
	There is a canonical isomorphism of graded Hopf algebras
	$$
	H_{\bullet}(\mathfrak{gl}(A)) \cong \mathrm{Sym} (HC_{\bullet }(A)[1]).
	$$
\end{theorem}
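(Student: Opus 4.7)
The plan is to follow the classical approach via the Chevalley--Eilenberg complex combined with the invariant theory reviewed in the preliminaries. First I would write the Chevalley--Eilenberg chain complex computing the Lie algebra homology,
$$ CE_{\sbullet}(\mathfrak{gl}_N(A)) = \mathrm{Sym}(\mathfrak{gl}_N(A)[1]), $$
equipped with the Koszul-type differential $d_{CE}$ induced by the Lie bracket. The adjoint action of $\mathfrak{gl}_N$ on itself extends to an action on this complex commuting with $d_{CE}$. Since $\mathrm{char}(k)=0$ and $\mathfrak{gl}_N$ is reductive, and since the adjoint action on $CE_{\sbullet}(\mathfrak{gl}_N(A))$ is chain-homotopic to zero, the projection to $\mathfrak{gl}_N$-coinvariants is a quasi-isomorphism. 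Thus
$$ H_{\sbullet}(\mathfrak{gl}(A)) \;\cong\; \lim_{\overset{\longrightarrow}{N}} H_{\sbullet}\bigl((\mathrm{Sym}(\mathfrak{gl}_N(A)[1]))_{\mathfrak{gl}_N}, d_{CE}\bigr). $$

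Next I would exploit $\mathfrak{gl}_N(A) = \mathfrak{gl}_N \otimes A$ and the standard identification of symmetric powers of a shifted space with tensor powers modulo signed permutations, so that each graded piece becomes
$$ \bigl((\mathfrak{gl}_N^{\otimes n} \otimes A^{\otimes n})_{\mathfrak{gl}_N}\bigr)_{S_n}, $$
where the residual $S_n$ acts diagonally by place permutation on both tensor factors (with Koszul signs from the shift). Invoking the lemma that $(\mathfrak{gl}_N^{\otimes n})_{\mathfrak{gl}_N} = (\mathfrak{gl}_N^{\otimes n})_{GL_N}$ together with Proposition \ref{Prop_TT}, in the stable range $N \geq n$ we have $(\mathfrak{gl}_N^{\otimes n})_{\mathfrak{gl}_N} \cong k[S_n]$, and the place-permutation action on the matrix factor translates to the conjugation action on $k[S_n]$. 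The complex therefore reduces to
$$ \bigoplus_{n \geq 0}\; (k[S_n] \otimes A^{\otimes n})_{S_n}, $$
with $S_n$ acting by conjugation on $k[S_n]$ and by place permutation on $A^{\otimes n}$.

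Now I would decompose $k[S_n]$ into conjugacy classes indexed by cycle types $\lambda = (\lambda_1, \lambda_2, \dots)$. A conjugacy class of cycle type $\lambda$ contributes $\bigotimes_i \bigl(A^{\otimes \lambda_i}\bigr)_{\Z_{\lambda_i}}$ after taking the remaining coinvariants, because the stabilizer of a representative permutation is the product of cyclic groups generated by its cycles times the symmetry permuting equal-size cycles. Summing over $n$ and all partitions assembles these contributions into a symmetric algebra on the pieces $\bigoplus_{m \geq 1} \bigl(A^{\otimes m}\bigr)_{\Z_m}[1]$, which are precisely the (shifted) Connes complex $C^{\lambda}_{\sbullet}(A)[1]$ from Corollary \ref{Cor_cyc}.

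The main obstacle is the differential matching: one must check carefully that under these identifications the Chevalley--Eilenberg differential $d_{CE}$ restricts on each $\bigl(A^{\otimes \lambda_i}\bigr)_{\Z_{\lambda_i}}$ factor to the Connes differential $b$, and that the bracket terms connecting different cycles produce no contribution modulo coinvariants. This is the calculation-heavy step, relying on the explicit trace formulas of Proposition \ref{Prop_T} to evaluate the brackets. Once this is verified, Corollary \ref{Cor_cyc} identifies the homology as $\mathrm{Sym}(HC_{\sbullet}(A)[1])$. Finally, the Hopf algebra structure is obtained by matching the Pontryagin product on $H_{\sbullet}(\mathfrak{gl}(A))$ induced by the block-diagonal embedding $\mathfrak{gl}_N \oplus \mathfrak{gl}_M \hookrightarrow \mathfrak{gl}_{N+M}$, and its dual coproduct, with the canonical Hopf algebra structure on the free graded commutative algebra on $HC_{\sbullet}(A)[1]$.
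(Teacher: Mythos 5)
Your proposal is correct in spirit and reaches the same conclusion, but it takes a genuinely different route from the paper in the crucial third step. After the common reduction to $\mathfrak{gl}_N$-coinvariants and the invariant-theoretic identification with $\bigoplus_n (k[S_n]\otimes (A[1])^{\otimes n})_{S_n}$, the paper introduces the full differential graded Hopf algebra structure (coproduct from the diagonal, product from the interleaving direct sum of formula~\ref{marix_dsum}) and invokes two structural facts in characteristic zero: that homology commutes with the primitive functor for cocommutative DG Hopf algebras, and the Milnor--Moore theorem identifying a connected cocommutative Hopf algebra with the symmetric algebra on its primitives. This reduces the entire differential computation to the \emph{single-cycle} (primitive) part, which is visibly the Connes complex. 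You instead propose to directly decompose $k[S_n]$ by cycle type, compute the stabilizer coinvariants $\prod_i \mathrm{Sym}^{m_i}\bigl((A[1]^{\otimes i})_{\Z_i}\bigr)$ cycle type by cycle type, and assemble a chain-level isomorphism with $\mathrm{Sym}(C^{\lambda}_{\sbullet}(A)[1])$; you rightly flag the differential-matching as the heavy step. Your route is more elementary and avoids the Hopf-algebraic black boxes, but it obliges you to verify the $\mathrm{Sym}$-compatibility of $d_{CE}$ on the whole complex rather than only on primitives. Note, in your favor, that the cross-cycle brackets actually vanish on the nose for the standard representatives $E^{a_1}_{1\sigma(1)}\wedge\cdots\wedge E^{a_n}_{n\sigma(n)}$ (since $[E_{ij},E_{kl}]=0$ when $\{i,j\}\cap\{k,l\}=\varnothing$), which is cleaner than the ``modulo coinvariants'' you invoke; and in char $0$, $\mathrm{Sym}$ commutes with homology, so the final Künneth step works. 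Two minor corrections: the relevant reference at the end is Theorem~\ref{Thm_Connes_cpx} (the absolute statement $H^{\lambda}_n(A)\cong HC_n(A)$), not Corollary~\ref{Cor_cyc} which is the relative version over a separable ground ring $S$; and the paper's product is the interleaving $\oplus$ of formula~\ref{marix_dsum}, not the block-diagonal $\mathfrak{gl}_N\oplus\mathfrak{gl}_M\hookrightarrow\mathfrak{gl}_{N+M}$, though the two are homotopic on the colimit.
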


\begin{proof}(sketch)
	Step 1. We can decompose the \CE complex by the adjoint action of $\mathfrak{gl}_N$, and only consider the trivial representation in the homology. This is due to the following proposition.
	\begin{prop}
		\label{Lie_ho_red}
		Let $\mathfrak{g}$ be a Lie algebra and $V$ a $\mathfrak{g}$ module. Let $\mathfrak{h}$ be a reductive sub-Lie algebra of $\mathfrak{g}$. Then the surjective map $(V\otimes \mathrm{Sym} (\mathfrak{g}[1])) \to(V\otimes \mathrm{Sym} (\mathfrak{g}[1]))_{\mathfrak{h}}$ induces an isomorphism on homology
$$
			H_{\sbullet}(\mathfrak{g},V) \cong H_{\sbullet}((V\otimes \mathrm{Sym} (\mathfrak{g}[1]))_{\mathfrak{h}},d_{CE})\,. 
$$
	\end{prop}
	
	In our case, we get a quasi-isomorphism of complex
$$
		(\mathrm{Sym}(\mathfrak{gl}_N(A)[1]),d_{CE}) \to (\mathrm{Sym}(\mathfrak{gl}_N(A)[1])_{\mathfrak{gl}_N},d_{CE}).
$$
	
	Step 2. Note that $\mathrm{Sym}^n(\mathfrak{gl}_N(A)[1])$ can be regarded as the coinvariant of $\mathfrak{gl}_N(A)^{\otimes n}$ under the signed action of $S_n$. The action of $S_n$ commutes with the adjoint action of $\mathfrak{gl}_N$. We have
	$$
	(\mathrm{Sym}^n(\mathfrak{gl}_N(A)[1]))_{\mathfrak{gl}_N} \cong ((\mathfrak{gl}_N^{\otimes n} \otimes (A[1])^{\otimes n})_{\mathfrak{gl}_N})_{S_n} \cong ((\mathfrak{gl}_N^{\otimes n} )_{\mathfrak{gl}_N}\otimes (A[1])^{\otimes n})_{S_n} .
	$$
	Classical invariant theory tells us that for $N\geq n$, there is an isomorphism 
	$$
	T^*: (\mathfrak{gl}_N^{\otimes n} )_{\mathfrak{gl}_N} \to k[S_n].
	$$
	As a consequence, we have an isomorphism
	$$
	\Theta: (\mathrm{Sym}^n(\mathfrak{gl}(A)[1]))_{\mathfrak{gl}} \cong (k[S_n]\otimes (A[1])^{\otimes n})_{S_n} .
	$$
	According to Proposition \ref{Prop_TT}, this isomorphism $\Theta$ is given as follows:
\begin{equation*}
	\Theta(E^{a_1}_{1\sigma(1)}\wedge E^{a_2}_{2\sigma(2)} \dots \wedge E^{a_n}_{n\sigma(n)}) = (\sigma\otimes (a_1, a_2,\dots a_n)) \in (k[S_n]\otimes (A[1])^{\otimes n})_{S_n} .
\end{equation*}
	Under this isomorphism, $S_n$ acts on $k[S_n]$ by conjugation and acts on $A^{\otimes n}$ by signed permutation.
	
	Step 3. The diagonal map $\Delta:\mathfrak{gl}(A)\to \mathfrak{gl}(A)\times\mathfrak{gl}(A)$ induces a map on the \CE complex $\Delta: (\mathrm{Sym}^n(\mathfrak{gl}(A)[1]))_{\mathfrak{gl}} \to (\mathrm{Sym}^n(\mathfrak{gl}(A)[1]))_{\mathfrak{gl}}\otimes(\mathrm{Sym}^n(\mathfrak{gl}(A)[1]))_{\mathfrak{gl}}$. This map defines a coassociative and cocommutative coproduct on the complex $(\mathrm{Sym}(\mathfrak{gl}(A)[1]))_{\mathfrak{gl}}$.

	There is also a map $\oplus: \mathfrak{gl}(A)\times \mathfrak{gl}(A) \to \mathfrak{gl}(A)$ which can be schematically described as
\begin{equation}
	\label{marix_dsum}
		\begin{pmatrix}
		\star&\star&\star &\cdots\\
		\star&\star&\star &\cdots\\
		\star&\star&\star &\cdots\\
		\vdots&\vdots&\vdots&\vdots
	\end{pmatrix}\oplus \begin{pmatrix}
		\times&\times&\times &\cdots\\
		\times&\times&\times &\cdots\\
		\times&\times&\times &\cdots\\
		\vdots&\vdots&\vdots&\vdots
	\end{pmatrix} =  
	\begin{pmatrix}
		\star&0&\star&0&\star&0 &\cdots\\
		0&\times&0&\times&0&\times &\cdots\\
		\star&0&\star&0&\star&0 &\cdots\\
		0&\times&0&\times&0&\times &\cdots\\
		\vdots&\vdots&\vdots&\vdots&\vdots&\vdots&\vdots
	\end{pmatrix}.
\end{equation}
	It induces a map of complex $ \mathrm{Sym}(\mathfrak{gl}(A)[1])\otimes \mathrm{Sym}(\mathfrak{gl}(A)[1])\to \mathrm{Sym}(\mathfrak{gl}(A)[1])$. Although this map is neither associative nor commutative, it induces a graded associative and commutative product on the $\mathfrak{gl}$ coinvariant complex  $$\oplus_*: (\mathrm{Sym}(\mathfrak{gl}(A)[1]))_{\mathfrak{gl}}\otimes (\mathrm{Sym}(\mathfrak{gl}(A)[1]))_{\mathfrak{gl}} \to (\mathrm{Sym}(\mathfrak{gl}(A)[1]))_{\mathfrak{gl}}.$$ Therefore, $((\mathrm{Sym}(\mathfrak{gl}(A)[1]))_{\mathfrak{gl}},d_{CE})$ equipped with $\Delta$ and $\oplus_*$ is a commutative and cocommutative differential graded Hopf algebra. We have the following theorem
	\begin{theorem}
		Let $\mathcal{H}$ be a cocommutative differential graded Hopf algebra over a field of characteristic $0$. We have that the homology and primitive functors commute,
		\begin{equation*}
			H(\mathrm{Prim}\mathcal{H}) = \mathrm{Prim}H(\mathcal{H}).
		\end{equation*}
	\end{theorem}

\begin{theorem}
	Let $\mathcal{H}$ be a connected cocommutative graded Hopf (resp. differential graded Hopf) algebra over a field of characteristic $0$. Then the canonical map 
	\begin{equation*}
		U(\mathrm{Prim}\mathcal{H}) \to \mathcal{H}
	\end{equation*}
is an isomorphism of graded Hopf (resp. differential graded Hopf) algebras.
\end{theorem}
	
	As a corollary, $H_{\bullet}(\mathfrak{gl}(A))$ is a commutative and cocommutative graded Hopf algebra whose primitive part is the homology of $(\text{Prim}(\mathrm{Sym}(\mathfrak{gl}(A)[1]))_{\mathfrak{gl}},d_{CE})$. Moreover, since $H_{\bullet}(\mathfrak{gl}(A))$ is commutative, it is isomorphic to the graded symmetric algebra of $H_{\sbullet}(\mathrm{Prim}(\mathrm{Sym}(\mathfrak{gl}(A)[1])_{\mathfrak{gl}}))$.
	
	Step 4. We need to determine the primative part of $L_\bullet := \oplus_{n\geq 1}(k[S_n]\otimes (A[1])^{\otimes n})_{S_n} $. The coproduct on $\mathrm{Sym}(\mathfrak{gl}(A)[1])_{\mathfrak{gl}}$ induces a coproduct on $L_{\bullet}$ which we also denote as $\Delta$.
	\begin{equation}\label{Co_pro}
			\Delta(\sigma\otimes(\alpha_1,\dots \alpha_n))= \sum_{I,J}(\sigma_I\otimes (\alpha_{i_1},\alpha_{i_2}\dots)) \otimes (\sigma_J\otimes (\alpha_{j_1},\alpha_{j_2}\dots)) ,
	\end{equation}
	where the sum go through all ordered partitions $(I,J)$ of $S_n$ such that $\sigma(I) = I,\sigma(J) = J$.
	
	Let $U_n\subset S_n$ be the conjugacy class of the cycle $(1,2,\dots,n)$. We see that $P_{\bullet} :=  \oplus_{n\geq 1}(k[U_n]\otimes (A[1])^{\otimes n})_{S_n}$ is a subspace of $\mathrm{Prim}L_{\bullet}$. Moreover, any permutation is a product of cycles. We thus have a surjection $\mathrm{Sym} P_{\bullet} \to L_{\bullet}$. This tell us that $P_{\bullet} = \text{Prim}L_{\bullet}$. Therefore, we have an isomorphism
	$$
	\text{Prim}(\mathrm{Sym}(\mathfrak{gl}(A)[1]))_{\mathfrak{gl}} \cong \oplus_{n\geq 1}(k[U_n]\otimes (A[1])^{\otimes n})_{S_n}.
	$$
	As a $S_n$ module, $k[U_n]$ is isomorphic to the induced representation of the trivial representation of $\mathbb{Z}_n$ to $S_n$, $\mathrm{Ind}_{\Z_n}^{S_n}k$. We have
\begin{equation}
	\label{Prim_to_cyc}
		(\mathrm{Ind}_{\mathbb{Z}_n}^{S_n}k \otimes (A[1])^{\otimes n})_{S_n} \cong ((A[1])^{\otimes n})_{\Z_n} = C_{n-1}^{\lambda}(A).    
\end{equation}
	We see that $\text{Prim}(\mathrm{Sym}(\mathfrak{gl}(A)[1]))_{\mathfrak{gl}} \cong C_{\bullet - 1}^{\lambda}(A)$. One can also verify that the \CE differential $d_{CE}$ induces the Hochschild differential $b$.
\end{proof}

\section{Homology of Lie algebra of matrices associated to quiver}

\subsection{Adding fundamental representation}
In this section, we consider a generalization of the LQT theorem by considering Lie algebra homology with coefficient in the symmetric powers of the fundamental and the dual of fundamental representation of $\mathfrak{gl}_N(A)$. While we believe that the results in this section have not appeared in the literature, similar statement should already be known to some experts (c.f. \cite{Costello:2018zrm}).

Let $L$ be a left $A$ module and $M$ a right $A$ module. Let $\C^N$ be the fundamental representation of $\mathfrak{gl}_N$. 
Then $\C^N(L) := \C^N\otimes L $ is a $\mathfrak{gl}_N(A)$ module given by the following
\begin{equation}\label{fund_mod}
	(A \otimes a)(v \otimes x) = (Av \otimes ax).
\end{equation}
Similarly, for the dual representation $(\C^N)^*$, $(\C^N)^*(M):= (\C^N)^*\otimes M$ is a $\mathfrak{gl}_N(A)$ module given by 
\begin{equation}\label{afund_mod}
	(A \otimes a)(v^* \otimes x) = (-A^tv^* \otimes xa).
\end{equation}

We consider the following super Lie algebra
\begin{equation*}
	\mathcal{L}_N : = \mathfrak{gl}_N(A)\ltimes (\C^N(L) \oplus  (\C^N)^*(M))[-1].
\end{equation*}
The natural inclusion $\mathcal{L}_N \to \mathcal{L}_{N+1}$ defines direct limit $\mathcal{L}: =  \lim\limits_{\overset{\longrightarrow}{N}}  \mathcal{L}_N$. We have
\begin{equation*}
	\mathcal{L} = \mathfrak{gl}(A)\ltimes (\mathcal{V}(L) \oplus  \mathcal{V}^*(M))[-1],
\end{equation*}
where $\mathcal{V} =  \lim\limits_{\overset{\longrightarrow}{N}}  \C^N$. In this section, we consider the Lie algebra homology
\begin{equation*}
	H_{\sbullet}(\mathcal{L})  =  \lim_{\overset{\longrightarrow}{N}}  H_{\sbullet}(\mathcal{L}_N).
\end{equation*}
 By the definition, there is a natural isomorphism between the Lie algebra homology of $\mathcal{L}$ and the Lie algebra homology of $\mathfrak{gl}(A)$ valued in $\mathrm{Sym}(\mathcal{V}(L) \oplus  \mathcal{V}^*(M))$
\begin{equation*}
	H_{\sbullet}(\mathcal{L})  = H_{\sbullet}(\mathfrak{gl}(A),\mathrm{Sym}(\mathcal{V}(L) \oplus  \mathcal{V}^*(M))).
\end{equation*}

\begin{theorem}
	\label{LQT_addf}
	There is a canonical isomorphism of graded Hopf algebras
\begin{equation*}
	H_{\sbullet}(\mathfrak{gl}(A),\mathrm{Sym}(\mathcal{V}(L) \oplus  \mathcal{V}^*(M))) \cong \mathrm{Sym}(HC_{\sbullet}(A)[1] \oplus \mathrm{Tor}^A_{\sbullet}(M,L) ).
\end{equation*}
\end{theorem}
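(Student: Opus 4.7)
The plan is to adapt the four-step strategy from the sketch of the classical LQT theorem, enriching each step to accommodate the fundamental and anti-fundamental module factors. First, I would apply Proposition \ref{Lie_ho_red} with the reductive subalgebra $\mathfrak{h} = \mathfrak{gl}_N \subset \mathcal{L}_N$, reducing $H_{\sbullet}(\mathcal{L}_N)$ to the homology of $(\mathrm{Sym}(\mathcal{L}_N[1]))_{\mathfrak{gl}_N}$, and then take the direct limit $N \to \infty$.

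Second, I would carry out the invariant-theory computation. After expanding the symmetric algebra as an $S_n \times S_p \times S_q$ coinvariant of tensor factors, the essential building block is
$$\bigl((\mathfrak{gl}_N)^{\otimes n} \otimes (\mathbb{C}^N)^{\otimes p} \otimes ((\mathbb{C}^N)^*)^{\otimes q}\bigr)_{GL_N}.$$
Using $\mathfrak{gl}_N \cong \mathbb{C}^N \otimes (\mathbb{C}^N)^*$, this reduces to the $GL_N$-coinvariants of $(\mathbb{C}^N)^{\otimes (n+p)} \otimes ((\mathbb{C}^N)^*)^{\otimes (n+q)}$, which vanishes unless $p = q$ and, for $N \geq n+p$, is identified with $k[S_{n+p}]$ by Proposition \ref{Prop_TT}. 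The \CE complex then takes the form
$$\bigoplus_{n,p \geq 0} \bigl(k[S_{n+p}] \otimes A^{\otimes n} \otimes L^{\otimes p} \otimes M^{\otimes p}\bigr)_{S_n \times S_p \times S_p},$$
where the symmetric groups act on the respective tensor slots and simultaneously on the corresponding labels of $S_{n+p}$.

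Third, I would repeat the Hopf-algebra argument of Step 3 of the LQT sketch: the diagonal on $\mathfrak{gl}(A)$ together with the block-sum product \ref{marix_dsum}, extended by direct sum on $\mathcal{V}(L)$ and $\mathcal{V}^*(M)$, equip the coinvariant complex with the structure of a connected, cocommutative dg Hopf algebra, and Milnor-Moore then reduces everything to identifying the primitive part. A permutation $\sigma \in S_{n+p}$ is a pairing of dual vectors with vectors; the interior slots $\{1,\ldots,n\}$ carry both a $V$ and a $V^*$, whereas the $p$ module slots emit only a $V$ (from $\mathcal{V}(L)$) or only a $V^*$ (from $\mathcal{V}^*(M)$). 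Primitives correspond to single connected components under this pairing bookkeeping, which come in two types: a cycle lying entirely in $\{1,\ldots,n\}$, reproducing the Connes complex $C^{\lambda}_{\sbullet}(A)$ and thus $HC_{\sbullet}(A)[1]$ via Theorem \ref{Thm_Connes_cpx}; or an open path running from an $M$-slot through $k$ interior $A$-slots to an $L$-slot, contributing (after collecting the symmetric-group actions) a single copy of $M \otimes A^{\otimes k} \otimes L$ with no residual symmetry. The latter subcomplex is precisely the Bar resolution computing $\mathrm{Tor}^A_{\sbullet}(M, L)$.

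The hard part will be this last verification: checking that the \CE differential, after the invariant-theory rewriting, restricts on the open-path subcomplex to the Bar differential, with the endpoint terms $ma_1 \otimes \cdots$ and $\cdots \otimes a_k\ell$ correctly produced by the asymmetric bracket actions \ref{fund_mod} and \ref{afund_mod}. The closed-cycle piece is essentially the original LQT calculation, but the open-path piece is new and will require careful sign-tracking to see that the module actions at the endpoints supply the outermost $d_0$ and $d_n$ terms of the Bar differential, and that no spurious cyclic symmetry is introduced when passing to the $S_p \times S_p$ coinvariants.
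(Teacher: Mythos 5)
Your proposal follows essentially the same four-step strategy as the paper's proof: reduce to $\mathfrak{gl}_N$-coinvariants via Proposition~\ref{Lie_ho_red}, apply invariant theory (vanishing unless $p=q$, then identify with $k[S_{n+p}]$ after folding $\mathbb{C}^N\otimes(\mathbb{C}^N)^*$ into $\mathfrak{gl}_N$), use the diagonal/block-sum Hopf structure and Milnor--Moore, and identify primitives with Connes cycles plus open $M$-to-$L$ paths yielding the Bar complex for $\mathrm{Tor}^A_{\sbullet}(M,L)$. The paper makes your ``pairing bookkeeping'' heuristic precise with Proposition~\ref{Prop_Sl_coinv} (each $S_l$-orbit in $S_{k+l}$ has a representative whose cycles each meet $\{k+1,\dots,k+l\}$ at most once, so primitives are exactly single cycles of one of the two types you describe) and then carries out explicitly the CE-to-Hochschild differential check that you correctly flag as the remaining verification.
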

\begin{proof}

Step 1. Applying Proposition \ref{Lie_ho_red}. We only need to compute the $\mathfrak{gl}_N(k)$ coinvariant of the \CE complex $\mathrm{Sym}(\mathcal{L}_N[1])_{\mathfrak{gl}_N}$.

Step 2. Application of invariant theory. We observe that 
$$
(\mathrm{Sym}^k(\mathfrak{gl}_N(A)[1])\otimes \mathrm{Sym}^{l}(\C^N(L))\otimes  \mathrm{Sym}^{i}( (\C^N)^*(M)))_{\mathfrak{gl}_N}.
$$
is only non zero when $i=l$. In this case, we have
\begin{align*}
		&\left(\mathrm{Sym}^k(\mathfrak{gl}_N(A)[1]) \otimes \mathrm{Sym}^{l}(\C^N(L))\otimes  \mathrm{Sym}^{l}( (\C^N)^*(M))\right)_{\mathfrak{gl}_N}\\
		= & \left(\left((\mathfrak{gl}_N(A)[1]))^{\otimes k} \otimes (\C^N(L))^{\otimes l} \otimes  ((\C^N)^*(M)) ^{\otimes l} \right)_{S_k\times S_l\times S_l}\right)_{\mathfrak{gl}_N}\\
		= &  \left((\mathfrak{gl}_N^{\otimes k} \otimes (\C^N)^{\otimes l} \otimes(( \C^N)^*)^{\otimes l})_{\mathfrak{gl}} \otimes (A[1]^{\otimes k}\otimes L^{\otimes l} \otimes M^{\otimes l}) \right)_{S_k\times S_l\times S_l}\\
		= & \left((\mathfrak{gl}_N^{\otimes k} \otimes \mathfrak{gl}_N^{\otimes l})_{\mathfrak{gl}_N} \otimes (A[1]^{\otimes k}\otimes L^{\otimes l} \otimes M^{\otimes l}) \right)_{S_k\times S_l\times S_l}.
\end{align*}
Using classical invariant theory, we have an isomorphism 
\begin{equation*}
(\mathfrak{gl}_N^{\otimes k} \otimes \mathfrak{gl}_N^{\otimes l})_{\mathfrak{gl}_N}   \cong \C[S_{k+l}]. 
\end{equation*}
We consider the composite of the above isomorphisms, and denote it as $\widetilde{\Theta}$ at the  $N \to \infty$ limit.
\begin{align*}
		\widetilde{\Theta}: \quad &\left(\mathrm{Sym}^k(\mathfrak{gl}(A)[1]) \otimes \mathrm{Sym}^{l}(\mathcal{V}(L))\otimes  \mathrm{Sym}^{l}(\mathcal{ V}^*(M))\right)_{\mathfrak{gl}} \\
		&\to \left(\C[S_{k+l}] \otimes (A[1]^{\otimes k}\otimes L^{\otimes l} \otimes M^{\otimes l})\right)_{S_k\times S_l\times S_l}.
\end{align*}
Using Proposition \ref{Prop_TT}, we have
\begin{equation}\label{iso_CE_Hoch}
\begin{aligned}
		\widetilde{\Theta}&(E^{a_1}_{1\sigma(1)} \otimes E^{a_2}_{2\sigma(2)} \dots E^{a_k}_{k\sigma(k)}\otimes e^{x_{1}}_{k+1}\dots e^{x_{l}}_{k+l}\otimes f^{y_{1}}_{\sigma(k+1)}\dots f^{y_{l}}_{\sigma(k+l)} )\\
		& = \sigma\otimes ((a_1,\dots,a_k)\otimes(x_1,\dots,x_{l})\otimes (y_1,\dots,y_l)),
\end{aligned}
\end{equation}
where $\{e_i\}$ (resp. $\{f_i\}$) is the canonical basis of  $\mathcal{V}$ (resp. $\mathcal{V}^*$). We also use the notation $e^x_i := e_i\otimes x \in \mathcal{V}\otimes L$ and  $f^y_i := f_i\otimes y \in \mathcal{V}^*\otimes M$.

Using the above isomorphism, we can also identify the action of $S_k\times S_{l}\times S_l$ on $\C[S_{k+l}]$. Let $\rho_{k}: S_k \to S_{k+l}$ the inclusion that sends $\omega \in S_k$ to the element in $S_{k+l}$ that permutes $\{1,2,\dots,k\} \subset \{1,2,\dots,k+l\}$. Then $\omega \in S_k$ acts on $k[S_{k+l}]$ by the conjugate action
\begin{equation*}
	\sigma \in S_{k+l} \mapsto \rho_{k}(\omega)\circ\sigma \circ \rho_{k}(\omega^{-1}).
\end{equation*}

Let $\rho_l:S_l \to S_{k+l}$ be the inclusion that sends $\tau \in S_l$ to the element in $S_{k+l}$ that permutes $\{k+1,k+2,\dots,k+l\}\subset \{1,2,\dots,k+l\}$. Then $(\tau_1,\tau_2) \in S_l\times S_l$ acts on $k[S_{k+l}]$ as follows
\begin{equation*}
	\sigma \in S_{k+l} \mapsto \rho_{k}(\tau_2)\circ\sigma \circ \rho_{k}(\tau_1^{-1}).
\end{equation*}
We have the following proposition characterizing the orbits of $k[S_{k+l}]$ under the $S_l$ action
\begin{prop}\label{Prop_Sl_coinv}
	We define the set $S_{k,l} \subset  S_{k+l} $ as the set of elements whose cycle decomposition has at most one number in $\{k+1,k+2,\dots, k+l\}$ in each cycle.
	
	For any $\sigma \in S_{k+l}$, there exist an $\tau \in S_l$ such that $\sigma \circ \rho_{k}(\tau^{-1}) \in S_{k,l}$. Moreover, such a $\sigma \circ \rho_{k}(\tau^{-1}) \in S_{k,l}$ is the permutation that has the most number of cycles in the orbit of $\sigma \in S_{k+l}$ under the action of $S_l$.
	
\end{prop}
\begin{proof}
	We consider the cycle decomposition of $\sigma$. Let $(l_1, \cdots l_2, \cdots l_m, \cdots)$ be a cycle of $\sigma$, where $l_1,l_2,\dots l_m \in \{k+1,k+2,\dots, k+l\}$ and $\cdots$ refer to elements in $\{1,\dots,k\}$. We define $\rho_{k}(\tau^{-1})$ to act as $l_1\mapsto l_m, l_2 \mapsto l_1,\dots,l_m \mapsto l_{m-1}$. For product of cycles, $\rho_{k}(\tau^{-1})$ is defined similarly for each cycle. Then we can check that $\sigma \circ \rho_{k}(\tau^{-1}) \in S_{k,l}$ has cycle decomposition $(l_1,\cdots)(l_2,\cdots)\dots(l_m,\cdots)$, where $\cdots$ are elements in $\{1,\dots,k\}$. We see that  $\sigma \circ \rho_{k}(\tau^{-1}) \in S_{k,l}$.
\end{proof}

Step 3. Hopf algebra structure.

The diagonal map $\Delta: \mathfrak{gl}(A) \to \mathfrak{gl}(A)\times \mathfrak{gl}(A)$ and $\Delta: \mathcal{V}(L)\oplus \mathcal{V}^*(M) \to( \mathcal{V}(L)\oplus \mathcal{V}^*(M))\times(\mathcal{V}(L)\oplus \mathcal{V}^*(M))$ induce a map of complexes $\mathrm{Sym}(\mathcal{L}[1]) \to \mathrm{Sym}(\mathcal{L}[1]\times \mathcal{L}[1])_{\mathfrak{gl}} $.  Composite this map with the map $\mathrm{Sym}(\mathcal{L}[1]\times \mathcal{L}[1])_{\mathfrak{gl}}  \to \mathrm{Sym}(\mathcal{L}[1])  \otimes\mathrm{Sym}(\mathcal{L}[1])  $ gives us the comultiplication map. One can check that this comultiplication map is coassociative and cocommutative.

In order to define the multiplication map, we consider the direct sum of matrices as in \ref{marix_dsum} and the following maps
$$
	\tilde{\oplus}: \mathcal{V}(M) \times \mathcal{V}(M)  \to \mathcal{V}(M).
$$
given by
\begin{equation}
\begin{pmatrix}
	\star\\ \star \\ \star\\ \vdots
\end{pmatrix}\tilde{\oplus} \begin{pmatrix}
	\times \\ \times\\ \times \\ \vdots
\end{pmatrix}\to 
\begin{pmatrix}
	\star \\\times \\ \star \\ \times \\ \vdots
\end{pmatrix}.
\end{equation}
The map $\tilde{\oplus}: \mathcal{V}^*(N) \times \mathcal{V}^*(N)  \to \mathcal{V}^*(N)$ is defined similarly. These maps induce a product map of the complexes $\mathrm{Sym}(\mathcal{L}[1]) \otimes \mathrm{Sym}(\mathcal{L}[1])  \to \mathrm{Sym}(\mathcal{L}[1]) $, and a map on the $\mathfrak{gl}$ coinvariant complex $\tilde{\oplus}_*: (\mathrm{Sym}(\mathcal{L}[1]))_{\mathfrak{gl}}  \otimes (\mathrm{Sym}(\mathcal{L}[1]) )_{\mathfrak{gl}} \to (\mathrm{Sym}(\mathcal{L}[1]) )_{\mathfrak{gl}}$. We want to show that this product is associative and commutative. From the proof of the original LQT theorem, we know that this map restricts to an associative and commutative product on $ (\mathrm{Sym}(\mathfrak{gl}(A)[1]) )_{\mathfrak{gl}}$. We only need to prove that it also restricts to an associative and commutative product on $ \mathrm{Sym}(\mathcal{V}(L) \oplus \mathcal{V}^*(M)) )_{\mathfrak{gl}}$. This follows from the fact that for any $u,v,w \in \mathcal{V}(M)$, $((u\tilde{\oplus} v )\tilde{\oplus} w)$ and $(u\tilde{\oplus} (v \tilde{\oplus} w))$ only differs by a permutation. Similarly, $u\tilde{\oplus} v $ and $v \tilde{\oplus} u$ only differs by a permutation. We see that $\tilde{\oplus}$ induces an associative and commutative product after we pass to the $\mathfrak{gl}$ coinvariant.

As a result, the Lie algebra homology $H_{\sbullet}(\mathrm{Sym}(\mathcal{L}[1]),d_{CE})$ is isomorphic, as a Hopf algebra, to the symmetric algebra of the homology of the primitive part $\mathrm{Prim} (\mathrm{Sym}(\mathcal{L}[1]) )_{\mathfrak{gl}}$.

Step 4. Computation of the primitive part.
The coalgebra structure on $\C[S_{k+l}] \otimes (A[1]^{\otimes k}\otimes M^{\otimes l} \otimes N^{\otimes l})$ takes a similar form as \ref{Co_pro}. Let
\begin{equation}\label{prim_afund}
	P_{\sbullet} =  \bigoplus_{k \geq 1}(\C[U_{k}] \otimes (A[1]^{\otimes k}))_{S_k} \oplus  (\C[U_{k+1}]\otimes ( A[1]^{\otimes k}\otimes M \otimes N ))_{S_k}.
\end{equation}
We see that $P_{\sbullet}  \subset \mathrm{Prim} \left(\C[S_{k+l}] \otimes (A[1]^{\otimes k}\otimes M^{\otimes l} \otimes N^{\otimes l})\right)_{S_k\times S_l\times S_l}$. To see that the map $\mathrm{Sym} P_{\sbullet} \to \left(\C[S_{k+l}] \otimes (A[1]^{\otimes k}\otimes M^{\otimes l} \otimes N^{\otimes l})\right)_{S_k\times S_l\times S_l}$ is surjective, we use Proposition \ref{Prop_Sl_coinv} and observe that elements of $S_{k,l}$ are written as product of cycles that contain at most one number in $\{k+1,\dots,k+l\}\subset \{1,\dots,k+l\}$. Thus we have $P_{\sbullet}  =  \mathrm{Prim} \,\mathrm{Sym}(\mathcal{L}[1])_{\mathfrak{gl}} $.

Now we analyze $P_{\sbullet}$. The first summand $(\C[U_{k}] \otimes (A[1]^{\otimes k}))_{S_k} $ in \ref{prim_afund} is isomorphic to $C^{\lambda}_{k-1}(A)$. For the second summand in \ref{prim_afund}, note that $U_{k+1}$ is isomorphic to $S_k$ as an $S_k$ set. Therefore we have
\begin{equation*}
\mathrm{Prim}\,(\mathrm{Sym}(\mathcal{L}[1]) )_{\mathfrak{gl}}   = C^{\lambda}_{\sbullet}(A)[1] \oplus  B_{\sbullet}(A,M\otimes N),
\end{equation*}

In order to finish the proof it remains to identify the Hochschild differential on $B_{\sbullet}(A,M\otimes N)$ with the \CE differential.

By the isomorphism \ref{iso_CE_Hoch}, the image of the element
\begin{equation*}
	s =  E^{a_1}_{12} \otimes E^{a_2}_{23} \dots E^{a_n}_{n\,n+1}\otimes e^{x}_{n+1}\otimes f^{y}_{1} 
\end{equation*}
under $\widetilde{\Theta}$ represent the class of $(a_1\otimes a_2\dots a_n\otimes x \otimes y) \in B_n(A,M\otimes N)$.

The \CE differential is computed as follows:
\begin{equation*}
\begin{aligned}
		d_{CE}(s) =& E^{a_2}_{23}\otimes E^{a_3}_{34} \dots E^{a_n}_{n\,n+1}\otimes e^{x}_{n+1}\otimes f^{ya_1}_{2} \\ 
		&+ \sum_{i=1}^{n-1}(-1)^i E^{a_1}_{12} \dots  \otimes E^{a_ia_{i+1}}_{i,i+2}\otimes\dots E^{a_n}_{n\,n+1}\otimes e^{x}_{n+1}\otimes f^{y}_{1}\\
		& + (-1)^n E^{a_1}_{12} \otimes E^{a_2}_{23} \dots E^{a_{n-1}}_{n-1\,n}\otimes e^{a_n x}_{n}\otimes f^{y}_{1}. \\
\end{aligned}
\end{equation*}
Its image under $\widetilde{\Theta}$ is exactly the Hochschild differential
$$
	b(a_1\otimes \dots a_n\otimes x\otimes y).
$$

Finally, note that
$$
	HH_{\sbullet}(A,M\otimes N) \cong \mathrm{Tor}^A_{\sbullet}(N,M).
$$
\end{proof}

\subsection{Unframed quiver}
Recall that a quiver $Q$ is a directed graph that consist of two sets $Q_0$ (the set of vertices of $Q$), $Q_1$ (the set of edges of $Q$) and two functions
$$
	s,t : Q_1 \rightrightarrows Q_0,
$$
called the source and target functions.

Given a quiver $Q$, we consider the following data of algebras and modules associated with $Q$.
\begin{equation}
	\label{alg_data_quiver}
	 \left\{
	\begin{tabular}{c}
		To each vertex $v \in Q_0$ we assign a unital $k$ algebra $A_v$\\
		To each edge $e \in Q_1$ we assign a $A_{s(e)} - A_{t(e)}$ bimodule $M_e$
	\end{tabular}
	\right\}.
\end{equation}

This allows us to define the direct sum Lie algebra $\bigoplus_{v \in Q_0}\mathfrak{gl}_N(A_v)$ and the $\bigoplus_{v \in Q_0}\mathfrak{gl}_N(A_v)$ module $\bigoplus_{e\in Q_1} \mathrm{Mat}_N(M_e)$. Denote $\mathcal{M} := \lim\limits_{\overset{\longrightarrow}{N}}\mathrm{Mat}_N$. We consider the Lie algebra homology:
\begin{equation}\label{hom_un_Q}
	H_{\sbullet}(\bigoplus_{v \in Q_0}\mathfrak{gl}(A_v),\mathrm{Sym}(\bigoplus_{e\in Q_1}\mathcal{M}(M_e))) = \lim_{\overset{\longrightarrow}{N}} H_{\sbullet}(\bigoplus_{v \in Q_0}\mathfrak{gl}_N(A_v),\mathrm{Sym}(\bigoplus_{e\in Q_1}\mathrm{Mat}_N(M_e))).
\end{equation}
This can be identified with the Lie algebra homology of the following semidirect product Lie algebra
\begin{equation*}
	\mathfrak{g}_{Q,N} := \bigoplus_{v \in Q_0}\mathfrak{gl}_N(A_v)\ltimes \bigoplus_{e\in Q_1}\mathrm{Mat}_N(M_e)[-1].
\end{equation*}
We also define a (super) algebra as an abelian extension of $\bigoplus_{v \in Q_0} A_v$ by $\bigoplus_{e\in Q_1} M_e[-1]$.
\begin{equation*}
	\mathcal{A}_{Q} := \bigoplus_{v \in Q_0} A_v \bigoplus_{e\in Q_1} M_e[-1].
\end{equation*}
By the definition of $\mathcal{A}_Q$, we have the following identification of Lie algebras
\begin{equation*}
	\mathfrak{g}_{Q,N} = \mathfrak{gl}_N(\mathcal{A}_Q) .
\end{equation*}
Then we can further identify the homology \ref{hom_un_Q} with the Lie algebra homology of $\mathfrak{gl}(\mathcal{A}_Q)$. As a corollary of the Loday-Quillen-Tsygan theorem, we have
\begin{cor}
	There is a canonical isomorphism of graded Hopf algebras
	\begin{equation*}
		H_{\sbullet}(\bigoplus_{v \in Q_0}\mathfrak{gl}(A_v),\mathrm{Sym}(\bigoplus_{e\in Q_1}\mathcal{M}(M_e))) \cong \mathrm{Sym}(HC_{\sbullet}(\mathcal{A}_Q )[1]).
	\end{equation*}
\end{cor}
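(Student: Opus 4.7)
The plan is to reduce this corollary directly to the Loday-Quillen-Tsygan theorem by applying it to the super algebra $\mathcal{A}_Q$. The author has already done the crucial algebraic identification $\mathfrak{g}_{Q,N} = \mathfrak{gl}_N(\mathcal{A}_Q)$, so what remains is to recast both sides of the desired isomorphism in a form where the original LQT proof applies.

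First I would establish that the Lie algebra homology with coefficients in a symmetric power of a shifted module equals the Chevalley-Eilenberg homology of an associated super Lie algebra. Concretely, for any Lie algebra $\mathfrak{g}$ and $\mathfrak{g}$-module $V$, there is a canonical isomorphism $H_{\sbullet}(\mathfrak{g},\mathrm{Sym}(V[-1]))\cong H_{\sbullet}(\mathfrak{g}\ltimes V[-1])$, obtained by observing that the CE complex of the semidirect product is $(\mathrm{Sym}((\mathfrak{g}\oplus V[-1])[1]),d_{CE}) = (\mathrm{Sym}(\mathfrak{g}[1])\otimes\mathrm{Sym}(V),d_{CE})$. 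Applied to $\mathfrak{g}=\bigoplus_v\mathfrak{gl}_N(A_v)$ and $V=\bigoplus_e\mathrm{Mat}_N(M_e)$, the right-hand side reduces under the identification $\mathfrak{g}_{Q,N}=\mathfrak{gl}_N(\mathcal{A}_Q)$ to $H_{\sbullet}(\mathfrak{gl}_N(\mathcal{A}_Q))$. Taking the $N\to\infty$ limit gives $H_{\sbullet}(\mathfrak{gl}(\mathcal{A}_Q))$.

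Second, I would rerun each of the four steps of the LQT proof sketched earlier, now with $A$ replaced by the $\Z$-graded algebra $\mathcal{A}_Q$. Step 1 (reductive reduction via Proposition \ref{Lie_ho_red}) applies because $\bigoplus_v\mathfrak{gl}_N\subset\mathfrak{gl}_N(\mathcal{A}_Q)$ is a direct sum of reductive subalgebras and so is itself reductive. Step 2 (classical invariant theory) is unaffected by passing to a graded coefficient algebra, since the $\mathfrak{gl}_N$-coinvariants decouple from the grading: the isomorphism $\Theta$ of the LQT proof becomes an isomorphism $(\mathrm{Sym}^n(\mathfrak{gl}(\mathcal{A}_Q)[1]))_{\mathfrak{gl}}\cong(k[S_n]\otimes(\mathcal{A}_Q[1])^{\otimes n})_{S_n}$, with $S_n$ acting on $\mathcal{A}_Q^{\otimes n}$ by signed permutation incorporating Koszul signs from the odd part $\bigoplus_e M_e[-1]$. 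Step 3 (Hopf algebra structure via $\Delta$ and block-diagonal $\oplus$) carries over since both operations are defined purely in terms of the matrix structure and commute with taking graded coefficients; combined with Milnor-Moore this identifies $H_{\sbullet}(\mathfrak{gl}(\mathcal{A}_Q))$ with the symmetric algebra on the primitive part. Step 4 identifies the primitive part with $C^{\lambda}_{\sbullet-1}(\mathcal{A}_Q)$, whose homology is $HC_{\sbullet-1}(\mathcal{A}_Q)$ by Theorem \ref{Thm_Connes_cpx} applied to the super algebra $\mathcal{A}_Q$ (the characteristic zero hypothesis is what allows the averaging homotopies $h,h'$ to work).

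The main obstacle to watch for is sign consistency in the super setting. The CE differential for $\mathfrak{gl}(\mathcal{A}_Q)$ now contains Koszul signs from the odd generators $E_{ij}\otimes m_e$, and I must check these signs translate exactly to the Hochschild differential on $C^{\lambda}_{\sbullet}(\mathcal{A}_Q)$ under $\widetilde{\Theta}$. Since $\mathcal{A}_Q$ is concentrated in degrees $0$ and $-1$, a short direct verification on generators suffices. Once this is established, the chain of isomorphisms
\[ H_{\sbullet}(\mathfrak{g}_{Q},\mathrm{Sym}(\oplus_e\mathcal{M}(M_e))) \cong H_{\sbullet}(\mathfrak{gl}(\mathcal{A}_Q)) \cong \mathrm{Sym}(HC_{\sbullet}(\mathcal{A}_Q)[1]) \]
is immediate, completing the proof.
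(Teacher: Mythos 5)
Your proposal takes exactly the same route as the paper: identify the Lie algebra homology with coefficients in $\mathrm{Sym}(\bigoplus_e\mathcal{M}(M_e))$ with the Chevalley--Eilenberg homology of the semidirect product $\mathfrak{g}_{Q,N}$, observe that $\mathfrak{g}_{Q,N}=\mathfrak{gl}_N(\mathcal{A}_Q)$, and apply the Loday--Quillen--Tsygan theorem to the (graded) algebra $\mathcal{A}_Q$. The paper states this in two lines and cites LQT directly; you spell out why each of the four LQT steps survives passage to the $\Z$-graded super algebra $\mathcal{A}_Q$, which is careful but leads to the same conclusion.
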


The aim of this section is to compute the cyclic homology $HC_{\sbullet}(\mathcal{A}_Q )$.

Recall that a path of length $n$ in a quiver $Q$ is a sequence $(e_1,e_2,\dots,e_n)$ of edges such that $s(e_{i + 1}) = t(e_i)$ for $i = 1,\dots n - 1$. A pointed cycle is a path that satisfies the additional relation $s(e_1) = t(e_n)$. Denote $\mathrm{Cyc}^\circ_n(Q)$ the set of pointed cycles of length $n$ in $Q$. The cyclic group $\Z_n$ acts on $\mathrm{Cyc}^\circ_n$ via the action $(e_1,e_2,\dots,e_n) \to (e_n,e_1,\dots,e_{n-1})$. We define a cycle as an equivalent class of pointed cycles module the cyclic permutation. We denote $\mathrm{Cyc}_n(Q) : = \mathrm{Cyc}^{\circ}_n(Q)/\Z_n$, and $\mathrm{Cyc}(Q) = \bigcup_{n\geq 1}\mathrm{Cyc}_n(Q)$.

We define the degree $\deg \ell $ of a cycle $\ell$ to be the largest integer $d$ such that there exist a cycle $\ell_0$ and
$$
	\ell =\ell_0^d\,,
$$
where the multiplication is taken in the path algebra of the quiver. For each cycle $\ell$ represented by $(e_1,e_2,\dots,e_n)$ we define the following space
\begin{equation}\label{cyc_tensor}
	\mathcal{F}_{\ell} = H_{\sbullet}\left( M_{e_1}\otimes_{A_{t(e_1)}}^{\mathbb{L}}M_{e_2}\otimes_{A_{t(e_2)}}^{\mathbb{L}}\dots \otimes_{A_{t(e_{n-1})}}^{\mathbb{L}}M_{e_n}\otimes_{A_{t(e_n)}}^{\mathbb{L}}\right) _{\Z_{\deg \ell}}.
\end{equation} 
For any other representative $(e_1',e_2',\dots,e_n')$ of $\ell$, we have a cyclic action $g\in \Z_n$ such that $(e_1',e_2',\dots,e_n') = g(e_1,e_2,\dots,e_n)$. Then we have a canonical isomorphism
$$
	\left( M_{e_1'}\otimes_{A_{t(e_1')}}^{\mathbb{L}}\dots \otimes_{A_{t(e_{n-1}')}}^{\mathbb{L}}M_{e_n'}\otimes_{A_{t(e_n')}}^{\mathbb{L}}\right) _{\Z_{\deg \ell}} \cong \left( M_{e_1}\otimes_{A_{t(e_1)}}^{\mathbb{L}}\dots \otimes_{A_{t(e_{n-1})}}^{\mathbb{L}}M_{e_n}\otimes_{A_{t(e_n)}}^{\mathbb{L}}\right) _{\Z_{\deg \ell}},
$$
induced by the cyclic action $g\in \Z_n$. Therefore, the definition of $\mathcal{F}_{\ell}$ does not depend on the choice of representation of the cycle $\ell$.

In the special case of $\ell = (e)$, we have an algebra $A_{s(e)}$ (note that $s(e) = t(e)$), and a $A_{s(e)}$ bimodule $M_e$. Then the above definition gives us the Hochschild homology with coefficient in the bimodule $M_e$, $\mathcal{F}_{\ell} = HH_{\sbullet}(A_{s(e)}, M_e)$. 

In this section, we prove the following result
\begin{prop}
	\label{Prop_CycH_AQ}
For any quiver $Q$, let $\mathcal{A}_Q $ be defined as above. There is an isomorphism

\begin{equation}
	HC_{\sbullet}(\mathcal{A}_Q ) \cong \bigoplus_{v \in Q_0} HC_{\sbullet}(A_v) \bigoplus_{\ell \in \mathrm{Cyc}(Q)} \mathcal{F}_{\ell}[-1].
\end{equation}
\end{prop}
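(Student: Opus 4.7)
The plan is to compute $HC_\bullet(\mathcal{A}_Q)$ through a Connes complex relative to a carefully chosen separable subalgebra, and then decompose it according to the combinatorics of cycles in $Q$. Take $S=\bigoplus_{v\in Q_0} k\cdot 1_{A_v}\subset \mathcal{A}_Q$, which is separable because it is a finite product of copies of $k$. By Corollary \ref{Cor_cyc}, $HC_\bullet(\mathcal{A}_Q)$ is the homology of
$$C^{\lambda,S}_n(\mathcal{A}_Q)=\bigl((\mathcal{A}_Q)^{\otimes_S(n+1)}\bigr)_{\Z_{n+1}}.$$
The virtue of this choice is that $\otimes_S$ enforces vertex-matching: in any nonzero tensor monomial $X_1\otimes\cdots\otimes X_{n+1}$, the right-vertex of $X_i$ equals the left-vertex of $X_{i+1}$ cyclically, where $X_i\in A_v$ has left- and right-vertex $v$ and $X_i\in M_e[-1]$ has left-vertex $s(e)$ and right-vertex $t(e)$.

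Reading off the cyclic sequence of $M$-type factors in such a monomial yields a pointed cycle of $Q$, while the $A$-type factors in between sit at the intermediate vertices. Taking $\Z_{n+1}$-coinvariants identifies monomials whose pointed cycles differ by a cyclic shift; the stabilizer of a pointed cycle under cyclic rotation is exactly $\Z_{\deg \ell}$, where $\ell\in\mathrm{Cyc}(Q)$ is the underlying unpointed cycle. This produces a decomposition of chain complexes
$$C^{\lambda,S}_\bullet(\mathcal{A}_Q)\;=\;\bigoplus_{v\in Q_0} C^{\lambda}_\bullet(A_v)\;\oplus\;\bigoplus_{\ell\in\mathrm{Cyc}(Q)} C_\bullet(\ell),$$
where the first summand comes from pure-algebra monomials (trapped at a single vertex) and $C_\bullet(\ell)$ consists of monomials whose $M$-pattern realizes $\ell$. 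Since multiplying two adjacent factors never changes the cyclic $M$-pattern, the Hochschild-type differential $b$ respects this decomposition.

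For a cycle $\ell=(e_1,\dots,e_r)$ with $v_i=t(e_i)$ and a fixed pointed representative, the subcomplex $C_\bullet(\ell)$ unfolds as
$$\Bigl(\,\bigoplus_{k_1,\dots,k_r\geq 0}M_{e_1}[-1]\otimes A_{v_1}^{\otimes k_1}\otimes\cdots\otimes M_{e_r}[-1]\otimes A_{v_r}^{\otimes k_r}\Bigr)_{\Z_{\deg\ell}},$$
with the induced differential. Ignoring the $\Z_{\deg\ell}$ action, iterating Corollary \ref{Cor_Tor_S} around the loop identifies this with a model for the derived cyclic tensor product $M_{e_1}\otimes_{A_{v_1}}^{\mathbb{L}}\cdots\otimes_{A_{v_r}}^{\mathbb{L}}$, and then taking $\Z_{\deg\ell}$-coinvariants on homology (equivalent to coinvariants on the chain level in characteristic zero) yields $\mathcal{F}_\ell$. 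The degree accounting is clean: a monomial with $r$ $M$-factors and $\sum k_i$ $A$-factors has cyclic bar degree $n=r+\sum k_i-1$ and internal degree $-r$, hence total degree $\sum k_i-1$, whereas the same monomial in the $\mathcal{F}_\ell$-complex sits in degree $\sum k_i$; this uniform gap of $-1$ is the shift in $\mathcal{F}_\ell[-1]$. The trivial-length summand at vertex $v$ is by construction $C^{\lambda}_\bullet(A_v)$, contributing $HC_\bullet(A_v)$.

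The principal technical obstacle is in the identification of $C_\bullet(\ell)$: one must verify that the Connes differential $b$, restricted to the $\ell$-summand, truly realizes the ``wrap-around'' Hochschild boundary that joins $M_{e_r}$ back to $M_{e_1}$ via multiplication by elements of $A_{v_r}$, and that the $\Z_{\deg\ell}$-action inherited from rotations of the ambient cyclic group is compatible with this identification. Tracking the Koszul signs arising from the internal degree shifts $M_e\mapsto M_e[-1]$, which is what ultimately justifies the single overall $[-1]$, is the other bookkeeping step requiring care.
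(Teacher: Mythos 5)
Your proposal follows the paper's argument essentially step for step: the same choice of separable subalgebra $S=\bigoplus_{v}k\cdot 1_{A_v}$, the same use of the relative Connes complex via Corollary~\ref{Cor_cyc}, the same vertex-matching decomposition of $\otimes_S$-monomials by their $M$-pattern, and the same identification of the cycle-indexed subcomplex with the bar model for $\mathcal F_\ell$, together with the degree bookkeeping that produces the overall $[-1]$. The one place where the paper is more careful is the passage from $\Z_{n+1}$-coinvariants to $\Z_{\deg\ell}$-coinvariants: the stabilizer of a given tensor monomial under $\Z_{n+1}$ is $\Z_{\deg\tilde\ell}$ for the corresponding cycle $\tilde\ell$ of the augmented quiver $Q^+$ (which may be strictly smaller than $\Z_{\deg\ell}$, e.g.\ $x\otimes a\otimes x$ has trivial stabilizer in $\Z_3$ even though $\deg\ell=2$), and the paper recovers the stated $\Z_{\deg\ell}$-coinvariant only after an orbit--stabilizer argument on the non-injective parametrization map $\varphi:\mathrm{Cyc}_m(Q)\times\Z_+^m\to\mathrm{Cyc}(Q^+)$; your sentence ``the stabilizer of a pointed cycle under cyclic rotation is exactly $\Z_{\deg\ell}$'' elides this and should be reworked along those lines.
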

\begin{proof}
	Denote $\mathbb{1}_v : = \mathbb{1}_{A_v}$ the unit of the algebra $A_v$. We have $\mathbb{1}_v\cdot \mathbb{1}_{v'} = 0$ if $v \neq v'$ and $\mathbb{1}_v^2 = \mathbb{1}_v $.
	Let $S = \bigoplus_{v \in Q_0}k\mathbb{1}_{v} \subset \mathcal{A}_Q$. $S$ is a separable algebra with idempotent $e = \sum_{v \in Q_0}\mathbb{1}_{v} \otimes \mathbb{1}_{v}$. To compute the cyclic homology, we use Corollary \ref{Cor_cyc} and consider the Connes' complex $C^{\lambda,S}_{\sbullet}(\mathcal{A}_Q)$ over $S$. We have 
$$
		HC_{n}(\mathcal{A}_Q) \cong H_{n}(C_{\sbullet}^{\lambda,S}(\mathcal{A}_Q) ).
$$
Let $Q^+$ be the quiver constructed from $Q$ by adding to each vertex $v \in Q_0$ an edge $v^*$ that has $v$ as both its source and target.  In other words,
\begin{equation}
	\label{Q+}
	Q^{+} = (Q_0,Q_1\sqcup Q_0).
\end{equation}
For each vertex $v \in Q_0$, denote $v^*$ the corresponding edge in $Q^+$. We have $t(v^*) = s(v^*) = v$.

We observe that the tensor product over $S$ gives either $0$ or the tensor product over $k$. Denote $M_{v*} : = A_v[1]$. We have
\begin{equation}
	\label{tensor_S}
M_e\otimes_S M_{e'}  = \begin{cases}
	0 &\text{ if } t(e) \neq s(e') \text{ in } Q^{+},\\
	M_e\otimes M_{e'} & \text{ if } t(e) = s(e') \text{ in } Q^{+}.
\end{cases} 
\end{equation}
Therefore, we can rewrite the $n$-fold tensor product $$(\mathcal{A}_Q[1])^{\otimes_S n}\otimes_S : = \underbrace{\mathcal{A}_Q[1]\otimes_S\mathcal{A}_Q[1]\otimes_S\dots \mathcal{A}_Q[1]\otimes_{S}}_{n}$$ as follows
$$
	(\mathcal{A}_Q[1])^{\otimes_S n}\otimes_S  = \bigoplus_{\tilde{\ell} = (\tilde{e}_1,\dots,\tilde{e}_n) \in \mathrm{Cyc}^\circ_n(Q^+)} M_{\tilde{e}_1}\otimes M_{\tilde{e}_2} \otimes \dots M_{\tilde{e}_n}.
$$
 Under the above decomposition, the group $\mathbb{Z}_n$ acts on the summand by isomorphism
\begin{equation}
	\label{iso_cyc}
		t: M_{\tilde{e}_1}\otimes M_{\tilde{e}_2} \otimes \dots M_{\tilde{e}_n} \overset{\simeq}{\to} M_{\tilde{e}_n}\otimes M_{\tilde{e}_1} \otimes \dots M_{\tilde{e}_{n-1}},
\end{equation}
which is compatible with the cyclic action on the pointed cycle, $t:  (\tilde{e}_1,\tilde{e}_2,\dots,\tilde{e}_n)  \mapsto  (\tilde{e}_n,\tilde{e}_1,\dots,\tilde{e}_{n-1}) $. Then by passing to the coinvariant space, we can express the above direct sum as summing over cycles in $\mathrm{Cyc}(Q^+)$.

Note that when $\deg \tilde{\ell}  = d \geq 2$, $ \tilde{\ell} = \ell_0^d$ for some $\ell_0$. In this case a representative of $ \tilde{\ell}$ is fixed by a nontrivial subgroup $\Z_d \subset \Z_n$, generated by $t^{n/d}:  \tilde{\ell} \mapsto  \tilde{\ell}$. Therefore, we find that the Connes' complex is isomorphic to the following
\begin{equation}
	\label{Cyc_cpx+}
	C^{\lambda,S}_{\sbullet-1}(\mathcal{A}_Q) = \bigoplus_{\tilde{\ell} = (\tilde{e}_1,\dots,\tilde{e}_n) \in \mathrm{Cyc}(Q^+)} (M_{\tilde{e}_1}\otimes M_{\tilde{e}_2} \otimes \dots M_{\tilde{e}_n})_{\Z_{\deg  \tilde{\ell}}}.
\end{equation}
where we can choose any representative $(\tilde{e}_1,\dots,\tilde{e}_n)$ of the class $ \tilde{\ell}\in \mathrm{Cyc}(Q^+)$ using the isomorphism \ref{iso_cyc}.

By the construction of $Q^+$, any $\tilde{\ell}  \in \mathrm{Cyc}(Q^+)$ can be represented either by 
\begin{equation*}
	(v^*)^i\quad \text{ for some } v\in Q_0,
\end{equation*}
or by 
\begin{equation}\label{Q+_to_Q}
		(s(e_1)^*)^{i_1}e_1(s(e_2)^*)^{i_2}e_2\dots e_m \,\;\;\ \text{for some } \ell =(e_1,e_2,\dots, e_m) \in \mathrm{Cyc}_m(Q) .
\end{equation}
In the first case, we obtain the complex $C^{\lambda}_{\sbullet-1}(A_v)$ that computes the cyclic homology of $A_v$. We focus on the second case in the following.

We would like to use \ref{Q+_to_Q} to rewrite the summation \ref{Cyc_cpx+} as a summation over $\ell \in \mathrm{Cyc}_m(Q)$ and $i_1,\dots,i_m \in \Z_+$. However, the map
\begin{equation*}
 \varphi: \bigcup_{m \geq 1}\mathrm{Cyc}_m(Q)\times \Z_+^m \to \mathrm{Cyc}(Q^+)
\end{equation*}
defined by \ref{Q+_to_Q} is not injective. To correct this, we use a larger group for the coinvariant. We take any $\tilde{\ell} \in \mathrm{Cyc}(Q^+)$ and the corresponding $\ell \in \mathrm{Cyc}(Q)$ such that $\varphi(\ell,i_1,\dots,i_m) = \tilde{\ell}$. We observe that the group $\Z_{\deg \ell}$ acts transitively on the set of preimages $\varphi^{-1}(\tilde{\ell})$. Moreover, the subgroup that fix $(\ell,i_1,\dots,i_m)$ is exact $\Z_{\deg  \tilde{\ell}}$. Therefore,  for $\ell \in \mathrm{Cyc}(Q)$, we define the following $\text{length}(\ell)$-fold complex
\begin{equation*}
	C(Q,\ell)_{\sbullet,\dots,\sbullet} = \left( \bigoplus_{i_1,\dots,i_{m} \geq 0} (A_{s(e_1)}[1])^{\otimes i_1}\otimes M_{e_1}\otimes( A_{s(e_2)}[1])^{\otimes i_2}\otimes \dots M_{e_m}\right)_{\Z_{\deg \ell}}.
\end{equation*}
Combining with the first case when $\tilde{\ell} = (v^*)^i$, we have the following isomorphism
\begin{equation*}
	\bigoplus_{\tilde{\ell} = (\tilde{e}_1,\dots,\tilde{e}_n) \in \mathrm{Cyc}(Q^+)} (M_{\tilde{e}_1}\otimes M_{\tilde{e}_2} \otimes \dots M_{\tilde{e}_n})_{\Z_{\deg  \tilde{\ell}}}\cong  \bigoplus_{v \in Q_0} C^{\lambda}_{\sbullet}(\mathcal{A}_v) [1]\bigoplus_{m\geq 1}\bigoplus_{\ell \in \mathrm{Cyc}_m(Q)} \mathrm{Tot} \,C(Q,\ell).
\end{equation*}
Then we find  that the Connes' cyclic complex is isomorphic to
\begin{equation*}
 	C^{\lambda,S}_{\sbullet}(\mathcal{A}_Q) =  \bigoplus_{v \in Q_0} C^{\lambda}_{\sbullet}(\mathcal{A}_v) \bigoplus_{\ell \in \mathrm{Cyc}(Q)} \mathrm{Tot} \,C(Q,\ell)[-1].
\end{equation*}

Finally, we analyze the differential of the complex $C(Q,\ell)$. The differential is induced from the differential $b$ on the Connes' complex $C_{\sbullet}^{\lambda,S}(\mathcal{A}_Q)$.  For any $x_{j}\in M_{e_{j}}$ and $a^j_1,\dots, a^j_{i_j} \in A_{s(e_j)},a^{j+1}_1,\dots, a^{j+1}_{i_{j+1}} \in A_{s(e_{j+1})}$ , we have
\begin{equation}
\begin{aligned}
		&b(\dots \otimes a^j_1\otimes a^j_2\otimes \dots\otimes a^j_{i_j} \otimes x_j\otimes a^{j+1}_1\otimes \dots \otimes a^{j+1}_{i_{j+1}}\otimes \dots )\\
		 & = \dots \dots \otimes a^j_1a^j_2\otimes \dots\otimes a^j_{i_j} \otimes x_j\otimes a^{j+1}_1\otimes \dots \otimes a^{j+1}_{i_{j+1}}\otimes \dots \\
		 &+\dots\\
		&  + \dots \otimes a^j_1\otimes a^j_2\otimes \dots\otimes a^j_{i_j}  x_j\otimes a^{j+1}_1\otimes \dots \otimes a^{j+1}_{i_{j+1}}\otimes \dots\\
		& + \dots \otimes a^j_1\otimes a^j_2\otimes \dots\otimes a^j_{i_j} \otimes x_j a^{j+1}_1\otimes \dots \otimes a^{j+1}_{i_{j+1}}\otimes \dots\\
		& + \dots
\end{aligned}
\end{equation}
We find that the differential on $C(Q,\ell)$ correspond to the differential of the Bar complex computing the derived tensor product $ \left( M_{e_1}\otimes_{A_{t(e_1)}}^{\mathbb{L}}M_{e_2}\otimes_{A_{t(e_2)}}^{\mathbb{L}}\dots \otimes_{A_{t(e_{n-1})}}^{\mathbb{L}}M_{e_n}\otimes_{A_{t(e_n)}}^{\mathbb{L}}\right) _{\Z_{\deg \ell}}$.

\end{proof}

\begin{eg}
	In the simplest case, we consider the quiver with one vertex $v$ and one edge $e$ starting and ending on $v$. For this quiver, we assign an associative algebra $A$ to the vertex and a $A$-$A$ bimodule $M$ to the edge.
			\begin{center}
		\begin{tikzpicture}[scale = 0.5]
			\draw(0,0) circle (1);
			\fill (-1,0) circle (2pt) node[left] {$A$};
			\draw (1,0) node[right] {$M$};
		\end{tikzpicture}
	\end{center}

In this case, we find that the Lie algebra homology $H_{\sbullet}(\mathfrak{gl}(A),\mathrm{Sym}(\mathcal{M}(M)))$ is given by the following
\begin{equation*}
	\mathrm{Sym}(HC_{\sbullet}(A)[1]\oplus H_{\sbullet}(A,M) \oplus H_{\sbullet}(M\otimes^{\mathbb{L}}_AM\otimes^{\mathbb{L}}_A)_{\Z_2}\oplus \cdots).
\end{equation*}
\end{eg}

\subsection{Framed quiver}
A framed quiver $Q^{\text{fr}}$ consists of the data of a quiver $Q = (Q_0,Q_1)$ together with two subsets $W^+,W^-$ of $Q_0$. Denote the inclusion map $j:W^+\sqcup W^- \to Q_0$. We can think of the framed quiver $Q^{\text{fr}}$ as adding to $Q$ extra vertices $W^+,W^-$ and edges $w^+ \to j(w^+)$ for all $w^+ \in W^+$, and $ j(w^-) \to w^-$ for all $w^- \in W^-$.

For a framed quiver $Q^{\text{fr}} = (Q,W^+,W^-)$, we consider the same set of data \ref{alg_data_quiver} for $Q$, together with the following
\begin{equation}
	\left\{
	\begin{tabular}{c}
		To each vertex $w^+ \in W^+$ we assign a right $A_{j(w^+)}$ module $M_{w^+}$\\
		To each vertex $w^- \in W^-$ we assign a left $A_{j(w^-)}$ module $M_{w^-}$
	\end{tabular}
	\right\}.
\end{equation}

In the last section, we defined the algebra $\mathcal{A}_Q$ associated with the quiver $Q$. Given the above data associated with the framing, we denote $$M_+ = \bigoplus_{w^+ \in W^+}M_{w^+},\quad M_- = \bigoplus_{w^- \in W^-}M_{w^-}.$$ Then $M_+$ is a right $\mathcal{A}_Q$ module and $M_-$ is a left $\mathcal{A}_Q$ module. Using the construction of \ref{fund_mod} and \ref{afund_mod}, we have a $\mathfrak{gl}_N(\mathcal{A}_Q)$ module $\C^{N}(M_-)\bigoplus (\C^N)^*(M_+).$
In this section, we compute the following Lie algebra homology
\begin{equation*}
	H_{\sbullet}(\mathfrak{gl}(\mathcal{A}_Q),\mathrm{Sym}(\mathcal{V}(M_-)\oplus \mathcal{V}^*(M_+))) = \lim_{\overset{\longrightarrow}{N}} H_{\sbullet}(\mathfrak{gl}_N(\mathcal{A}_Q),\mathrm{Sym}(\C^N(M_-)\oplus (\C^N)^*(M_+))).
\end{equation*}
As a corollary of Theorem \ref{LQT_addf}, we have the following
\begin{cor}
	We have a canonical isomorphism of graded Hopf algebras
	\begin{equation*}
	H_{\sbullet}(\mathfrak{gl}(\mathcal{A}_Q),\mathrm{Sym}(\mathcal{V}(M_-)\oplus \mathcal{V}^*(M_+))) \cong \mathrm{Sym}(HC_{\sbullet}(\mathcal{A}_Q) [1]\oplus H_{\sbullet}(M_+\otimes_{\mathcal{A}_Q}^{\mathbb{L}}M_-)).
	\end{equation*}
\end{cor}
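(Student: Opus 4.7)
The plan is that this is essentially an immediate application of Theorem \ref{LQT_addf} with the base algebra taken to be $\mathcal{A}_Q$ rather than an ordinary associative algebra. First I would verify the hypotheses: $M_+$ is a right $\mathcal{A}_Q$-module since each $M_{w^+}$ is a right $A_{j(w^+)}$-module, and we extend the action so that the other algebra summands $A_{v}$ for $v \neq j(w^+)$ and all bimodule summands $M_e[-1]$ act by zero; this is consistent because $\mathcal{A}_Q$ is an abelian extension, so products of bimodule elements vanish. Symmetrically, $M_-$ is a left $\mathcal{A}_Q$-module. Under these identifications, the $\mathfrak{gl}_N(\mathcal{A}_Q)$-modules $\mathbb{C}^N(M_-)$ and $(\mathbb{C}^N)^*(M_+)$ defined by \ref{fund_mod}, \ref{afund_mod} are exactly the modules to which Theorem \ref{LQT_addf} applies.

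Next I would observe that the only difference from the hypotheses of Theorem \ref{LQT_addf} is that $\mathcal{A}_Q$ is a super (or $\mathbb{Z}/2$-graded) algebra rather than an ordinary associative algebra. I would indicate that the proof of Theorem \ref{LQT_addf} — the reduction to $\mathfrak{gl}_N$-coinvariants via Proposition \ref{Lie_ho_red}, the application of classical invariant theory (Proposition \ref{Prop_TT}) via the isomorphism $\widetilde{\Theta}$, the identification of primitives via Proposition \ref{Prop_Sl_coinv}, and the matching of the Chevalley--Eilenberg differential with the Hochschild differential — goes through verbatim in the $\mathbb{Z}$-graded setting, using the Koszul sign convention. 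In particular, the classical invariant theory statement $(\mathfrak{gl}_N^{\otimes n})_{\mathfrak{gl}_N} \cong k[S_n]$ is insensitive to the coefficients, so passing from $A$ to $\mathcal{A}_Q$ only affects the tensor factors $A[1]^{\otimes k}\otimes L^{\otimes l}\otimes M^{\otimes l}$, which become $\mathcal{A}_Q[1]^{\otimes k}\otimes M_-^{\otimes l}\otimes M_+^{\otimes l}$.

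Applying Theorem \ref{LQT_addf} with $A = \mathcal{A}_Q$, $L = M_-$, $M = M_+$ then yields
\begin{equation*}
H_{\sbullet}(\mathfrak{gl}(\mathcal{A}_Q),\mathrm{Sym}(\mathcal{V}(M_-)\oplus \mathcal{V}^*(M_+))) \cong \mathrm{Sym}\bigl(HC_{\sbullet}(\mathcal{A}_Q)[1]\oplus \mathrm{Tor}^{\mathcal{A}_Q}_{\sbullet}(M_+,M_-)\bigr).
\end{equation*}
Finally I would rewrite $\mathrm{Tor}^{\mathcal{A}_Q}_{\sbullet}(M_+,M_-) = H_{\sbullet}(M_+\otimes^{\mathbb{L}}_{\mathcal{A}_Q} M_-)$ by definition, which matches the stated form of the corollary.

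The only genuine point of care — and hence the main obstacle, though a minor one — is the super/graded extension of Theorem \ref{LQT_addf}: one must check that the signs introduced by the odd generators in $\mathcal{A}_Q$ (namely the $M_e[-1]$ summands) are consistent with the Koszul signs appearing in the Chevalley--Eilenberg differential, so that the identification of $d_{CE}$ with the Hochschild differential $b$ remains valid. This is routine but worth remarking upon.
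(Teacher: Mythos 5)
Your proof is correct and matches the paper's intent exactly: the paper presents this as an immediate corollary of Theorem \ref{LQT_addf} applied with $A = \mathcal{A}_Q$, $L = M_-$, $M = M_+$, without further argument. Your explicit verification that $M_\pm$ carry $\mathcal{A}_Q$-module structures (with the extra summands acting trivially) and your flagging of the $\mathbb{Z}$-graded/Koszul-sign issue are both appropriate refinements of a step the paper leaves implicit, but the route is the same.
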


The cyclic homology $HC_{\sbullet}(\mathcal{A}_Q) $ is computed in the last section. In this section, we compute $M_+\otimes_{\mathcal{A}_Q}^{\mathbb{L}}M_-$.

Denote $\mathrm{fPath}_{w^+,w^-}(Q^{\mathrm{fr}})$ the set of paths in $Q^{\text{fr}}$ that start with $w^+$ and end with $w^-$. Let $\mathrm{fPath}(Q^{\mathrm{fr}})= \cup_{w^+ \in W^+,w^- \in W^-}\mathrm{fPath}_{w^+,w^-}$. To each $\rho = (w^+,e_1,\dots,e_n,w^-) \in \mathrm{fPath}(Q^{\mathrm{fr}})$, we assign the space
\begin{equation}
	\mathcal{F}_\rho = H_{\sbullet}\left(M_{w^+}\otimes^{\mathbb{L}}_{A_{j(w^+)}}M_{e_{1}}\otimes_{A_{t(e_1)}}^{\mathbb{L}} \dots M_{e_n}\otimes_{A_{j(w^-)}}^{\mathbb{L}} M_{w^-}\right).
\end{equation}

Then we have the following result
\begin{prop}
	\begin{equation}
		 H_{\sbullet}(M_+\otimes^{\mathbb{L}}_{\mathcal{A}_Q}\otimes M_-) = \bigoplus_{\rho \in \mathrm{fPath}} \mathcal{F}_\rho .
	\end{equation}
\end{prop}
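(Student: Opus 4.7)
The plan is to mirror the proof of Proposition \ref{Prop_CycH_AQ}, replacing the Connes complex by the two-sided Bar complex. First, I would take $S = \bigoplus_{v\in Q_0} k\mathbb{1}_v \subset \mathcal{A}_Q$, which is separable with idempotent $\sum_v \mathbb{1}_v\otimes \mathbb{1}_v$. By Corollary \ref{Cor_Tor_S}, the derived tensor product $M_+\otimes^{\mathbb{L}}_{\mathcal{A}_Q} M_-$ is computed by the relative Bar complex
\begin{equation*}
B^S_{\sbullet}(M_+,\mathcal{A}_Q,M_-) = \bigoplus_{n\geq 0} M_+\otimes_S (\mathcal{A}_Q[1])^{\otimes_S n}\otimes_S M_-,
\end{equation*}
equipped with the Hochschild differential.

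Next, I would apply the decomposition \eqref{tensor_S}. Since each $M_{w^{\pm}}$ is supported at the single vertex $j(w^{\pm})$ and tensor products over $S$ vanish unless the endpoints of consecutive factors agree, the $n$-th term decomposes as a direct sum indexed by paths in the auxiliary quiver $Q^+$ of \eqref{Q+} from some $j(w^+)$ to some $j(w^-)$, with $M_{v^*} := A_v[1]$. Every such path in $Q^+$ is uniquely determined by an underlying framed path $\rho = (w^+,e_1,\dots,e_n,w^-)\in \mathrm{fPath}(Q^{\mathrm{fr}})$, obtained by erasing the loop edges $v^*$, together with multiplicities $(i_0,i_1,\dots,i_n)\in \Z_{\geq 0}^{n+1}$ of consecutive $v^*$-edges inserted before $e_1$, between $e_j$ and $e_{j+1}$, and after $e_n$. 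Unlike the closed case there is no rotational symmetry to quotient out, so this assignment is a genuine bijection and the Bar complex splits as a direct sum $\bigoplus_{\rho \in \mathrm{fPath}(Q^{\mathrm{fr}})} C(Q^{\mathrm{fr}},\rho)_{\sbullet}$, with
\begin{equation*}
C(Q^{\mathrm{fr}},\rho)_{\sbullet}=\bigoplus_{i_0,\dots,i_n\geq 0} M_{w^+}\otimes (A_{j(w^+)}[1])^{\otimes i_0}\otimes M_{e_1}\otimes (A_{t(e_1)}[1])^{\otimes i_1}\otimes\cdots\otimes M_{e_n}\otimes (A_{j(w^-)}[1])^{\otimes i_n}\otimes M_{w^-}.
\end{equation*}

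Finally, I would identify $H_{\sbullet}(C(Q^{\mathrm{fr}},\rho)) \cong \mathcal{F}_\rho$. As in the last step of the proof of Proposition \ref{Prop_CycH_AQ}, the Hochschild differential $b$ restricted to $C(Q^{\mathrm{fr}},\rho)$ contracts adjacent $A$-factors and absorbs the outermost $A$-factors into the neighbouring module factors. The block of $(A_{t(e_j)}[1])^{\otimes i_j}$ sitting between $M_{e_j}$ and $M_{e_{j+1}}$ is then precisely the two-sided Bar resolution of $A_{t(e_j)}$ over itself, and analogous statements hold at the two framed ends. Collapsing each such Bar resolution, or equivalently applying Corollary \ref{Cor_Tor_S} iteratively one interior algebra at a time, shows that the total complex computes the iterated balanced derived tensor product
\begin{equation*}
M_{w^+}\otimes^{\mathbb{L}}_{A_{j(w^+)}}M_{e_1}\otimes^{\mathbb{L}}_{A_{t(e_1)}}\cdots\otimes^{\mathbb{L}}_{A_{j(w^-)}}M_{w^-},
\end{equation*}
whose homology is $\mathcal{F}_\rho$ by definition. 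Summing over $\rho$ then yields the proposition. I expect the main obstacle to be precisely this last iterated contraction, where one must verify that the single Hochschild differential on $C(Q^{\mathrm{fr}},\rho)$ agrees with the total differential on the multi-sided Bar resolution; everything else is a direct transcription of the unframed cyclic argument, simplified by the absence of cyclic symmetry.
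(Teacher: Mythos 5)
Your proposal follows essentially the same route as the paper: pass to the relative Bar complex $B^S_{\sbullet}(M_+,\mathcal{A}_Q,M_-)$ over the separable subalgebra $S$ via Corollary \ref{Cor_Tor_S}, decompose it using \eqref{tensor_S} into summands indexed by framed paths in the auxiliary quiver $Q^+$ together with multiplicities of inserted $v^*$-loops, and identify each summand $C(Q^{\mathrm{fr}},\rho)$ with the Bar complex computing the iterated balanced derived tensor product defining $\mathcal{F}_\rho$. Your observation that the absence of cyclic symmetry makes the path decomposition a genuine bijection (so no further coinvariants are needed, unlike in Proposition \ref{Prop_CycH_AQ}) matches what is implicit in the paper's argument.
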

\begin{proof}
	To compute the derived tensor product $M_+\otimes^{\mathbb{L}}_{\mathcal{A}_Q}\otimes M_- $, we use Corollary \ref{Cor_Tor_S} and consider the Bar complex over $S$
	$$
		B^S_{\sbullet}(M_+,\mathcal{A}_Q,M_-) = \bigoplus_{n\geq 0}M_+\otimes_S(\mathcal{A}_Q[1])^{\otimes_S n} \otimes_S M_-.
	$$
	Let $Q^{\mathrm{fr},+} = (Q^+,W^+,W^-)$, where $Q^+$ is constructed from $Q$ by \ref{Q+}. Namely, $Q^{\mathrm{fr},+} $ is constructed from the quiver $Q^{\mathrm{fr}}$ by adding to each non-framing vertex $v\in Q_0$ an edge $v^*$ that has $v$ as both its source and target. The tensor product $\otimes_S$ has the same property as \ref{tensor_S}. Therefore, we have the following isomorphism
	\begin{equation}
		B^S_{\sbullet}(M_+,\mathcal{A}_Q,M_-)  = \bigoplus_{\rho^+ = (w^+,e_1,\dots,e_n,w^-)  \in \mathrm{fPath}(Q^{\mathrm{fr},+})} M_{w^+}\otimes M_{e_1} \otimes \dots M_{w^-},
	\end{equation}
	where we denote $M_{v^*} = A_{v}[1]$ as before. 
	
	By definition, each framed path $\rho^+ \in \mathrm{fPath}(Q^{\mathrm{fr},+}) $ can be expressed as follows
$$
		w^+(s(e_1)^*)^{i_1}e_1(s(e_2)^*)^{i_2}e_2\dots e_m(t(e_m)^*)^{i_{m+1}} w^- \,\;\;\ \text{for some } \ell = (e_1,e_2,\dots, e_m) \in \mathrm{Cyc}_m(Q) .
$$
As a result, the Bar complex can be written as follows
$$
B^S_{\sbullet}(M_+,\mathcal{A}_Q,M_-)  = \bigoplus_{\rho  \in \mathrm{fPath}(Q^{\mathrm{fr}})} C(Q^{\mathrm{fr}},\rho),
$$
where $C(Q^{\mathrm{fr}},\rho)$ is the complex 
\begin{equation*}
	C(Q^{\mathrm{fr}},\rho)_{\sbullet,\dots,\sbullet} =  \bigoplus_{i_1,\dots,i_{m} \geq 0} M_{w^+}\otimes (A_{s(e_1)}[1])^{\otimes i_1}\otimes M_{e_1}\otimes  \dots \otimes (A_{t(e_m)}[1])^{\otimes i_{m+1}}\otimes M_{w^-}.
\end{equation*}
We find that the complex $C(Q^{\mathrm{fr}},\rho)_{\sbullet,\dots,\sbullet}$ is exactly the Bar complex computing the derived tensor product $M_{w^+}\otimes^{\mathbb{L}}_{A_{j(w^+)}}M_{e_{1}}\otimes_{A_{t(e_1)}}^{\mathbb{L}} \dots M_{e_n}\otimes_{A_{j(w^-)}}^{\mathbb{L}} M_{w^-}$. Therefore, we have the following 
	\begin{equation}
		\mathrm{Tor}^{\mathcal{A}_Q}_{\sbullet}(M_+,M_-) = \bigoplus_{\rho \in \mathrm{fPath}(Q^{\mathrm{fr},+})} \mathcal{F}_\rho .
	\end{equation}
\end{proof}
Together with the Proposition \ref{Prop_CycH_AQ} that compute the cyclic homology of $\mathcal{A}_Q$, we conclude that
\begin{theorem}\label{thm_main1}
	We have an isomorphism of graded Hopf algebras
$$
		\begin{aligned}
			H_{\sbullet}(\bigoplus_{v \in Q_0}\mathfrak{gl}(A_v),&\mathrm{Sym}(\bigoplus_{e\in E}\mathcal{M}(M_e)  \bigoplus_{w^-\in W^-} \mathcal{V}(M_{w^-})\bigoplus_{w^+ \in W^+} \mathcal{V}^*(M_{w^+})) )\\
			&\cong \mathrm{Sym} \left(  \bigoplus_{v \in Q_0}HC_{\sbullet}(A_v)[1] \bigoplus_{\ell \in \mathrm{Cyc}(Q)}\mathcal{F}_{\ell} \bigoplus_{\rho \in \mathrm{fPath}}\mathcal{F}_{\rho}\right) .
		\end{aligned}
$$
\end{theorem}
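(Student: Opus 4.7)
The plan is to view this theorem as a direct assembly of three earlier results in the paper, rather than a proof from scratch. First I would reinterpret the left-hand side entirely in terms of $\mathfrak{gl}(\mathcal{A}_Q)$: using the super-algebra $\mathcal{A}_Q = \bigoplus_{v\in Q_0} A_v \oplus \bigoplus_{e\in Q_1} M_e[-1]$ defined in the unframed section, one has the identification of super Lie algebras
$$
\bigoplus_{v\in Q_0}\mathfrak{gl}_N(A_v) \ltimes \bigoplus_{e\in Q_1}\mathrm{Mat}_N(M_e)[-1] \;\cong\; \mathfrak{gl}_N(\mathcal{A}_Q),
$$
and the modules $\bigoplus_{w^-\in W^-} \C^N(M_{w^-})$ and $\bigoplus_{w^+\in W^+} (\C^N)^*(M_{w^+})$ assemble into $\C^N(M_-)$ and $(\C^N)^*(M_+)$ for the $\mathcal{A}_Q$-modules $M_- = \bigoplus_{w^-} M_{w^-}$ (left) and $M_+ = \bigoplus_{w^+} M_{w^+}$ (right). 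Under these identifications the Lie algebra homology on the left-hand side is, after taking the $N\to\infty$ limit, exactly $H_{\sbullet}(\mathfrak{gl}(\mathcal{A}_Q),\mathrm{Sym}(\mathcal{V}(M_-)\oplus \mathcal{V}^*(M_+)))$.

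Second, I would apply Theorem \ref{LQT_addf} with $A=\mathcal{A}_Q$, $L=M_-$, $M=M_+$ to obtain, as graded Hopf algebras,
$$
H_{\sbullet}(\mathfrak{gl}(\mathcal{A}_Q),\mathrm{Sym}(\mathcal{V}(M_-)\oplus \mathcal{V}^*(M_+))) \;\cong\; \mathrm{Sym}\bigl(HC_{\sbullet}(\mathcal{A}_Q)[1]\oplus \mathrm{Tor}^{\mathcal{A}_Q}_{\sbullet}(M_+,M_-)\bigr).
$$
Third, I would substitute in Proposition \ref{Prop_CycH_AQ} to decompose the cyclic homology as $\bigoplus_{v\in Q_0} HC_{\sbullet}(A_v)\oplus \bigoplus_{\ell\in\mathrm{Cyc}(Q)} \mathcal{F}_\ell[-1]$, and substitute the proposition immediately preceding the theorem to decompose the Tor-group as $\bigoplus_{\rho\in\mathrm{fPath}(Q^{\mathrm{fr}})}\mathcal{F}_\rho$. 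Assembling these inside the $\mathrm{Sym}$ functor and absorbing the extra $[-1]$ against the external $[1]$ on $HC_{\sbullet}(\mathcal{A}_Q)$ yields the right-hand side of the theorem.

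The step I expect to require the most care is checking that Theorem \ref{LQT_addf} applies verbatim when $\mathcal{A}_Q$ is a super-algebra rather than an ordinary algebra, and that the Hopf algebra structures (and not merely the underlying graded vector spaces) match across all three decomposition results. Concretely, one has to verify that the Koszul sign conventions in the \CE complex, the bar and Connes complexes, and in the construction of the fundamental/antifundamental modules via \ref{fund_mod} and \ref{afund_mod} are consistent with the internal $\Z$-grading of $\mathcal{A}_Q$, so that the invariant-theory isomorphism $\widetilde\Theta$ of Step 2 of Theorem \ref{LQT_addf} remains an isomorphism of complexes; once this is in place, the coproduct $\Delta$ and the product $\oplus_*\,,\tilde\oplus_*$ act block-diagonally with respect to the decomposition of $\mathcal{A}_Q$ into its vertex- and edge-summands and automatically respect the $\mathrm{Cyc}(Q)$- and $\mathrm{fPath}(Q^{\mathrm{fr}})$-gradings. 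With that compatibility verified, the theorem follows by direct substitution, with no further homological work required.
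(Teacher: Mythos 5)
Your proposal is correct and follows the same route as the paper: identify the left-hand side with $H_{\sbullet}(\mathfrak{gl}(\mathcal{A}_Q),\mathrm{Sym}(\mathcal{V}(M_-)\oplus\mathcal{V}^*(M_+)))$, apply Theorem \ref{LQT_addf} with $A=\mathcal{A}_Q$, $L=M_-$, $M=M_+$, and then substitute Proposition \ref{Prop_CycH_AQ} together with the Tor decomposition, absorbing the shift $[-1]$ on $\mathcal{F}_\ell$ against the external $[1]$. Your cautionary remark about Theorem \ref{LQT_addf} needing to hold for the super-algebra $\mathcal{A}_Q$ with the correct Koszul signs is a fair observation that the paper takes as implicit rather than verifying explicitly.
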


\subsection{Factorization homology}
In this section, we show that our previous result can be succinctly reformulated in terms of (stratified) factorization homology.

As a first step, we see that the data of \ref{alg_data_quiver} assigns a vertex $v \in Q_0$ an associative algebra $A_v$, which can be further identified with a one dimensional factorization algebra. We have the isomorphism between the Hochschild homology of $A_v$ and the factorization homology of $A_v$ over $S^1$\cite{Lurie}:
\begin{equation}
	\int_{S^1}A_v \cong HH_{\sbullet}(A).
\end{equation}

The natural $S^1$ action that rotates the circle induces an action of $S^1$ on the factorization homology $\int_{S^1}A_v$. Moreover, this action can be identified with the Connes’ cyclic operator on the Hochschild bar complex \cite{Ayala2021Symmetry}. Taking homotopy orbits (coinvariant) of the $S^1$-action on the Hochschild homology gives us the cyclic homology. We have
\begin{equation*}
\left( \int_{S^1}A_v\right) _{S^1} \cong HC_{\sbullet}(A_v).
\end{equation*}

Next, we consider a cycle $\ell  = (e_1,\dots,e_m) \in \mathrm{Cyc}(Q)$ of the quiver. This gives us a set of associative algebras $(A_{t(e_i)})_{i=1,\dots,n}$ together with the set of $A_{s(e_i)}-A_{t(e_i)}$ bimodules $M_{e_i}$. In fact, this data $(A_{t(e_i)},M_{e_i})$ defines a stratified factorization algebra $\mathcal{A}_\ell$ on $S^1$ with $m$ marked points. 

		\begin{center}
	\begin{tikzpicture}
		\draw(0,0) circle (1);
		\fill (0,-1) circle (2pt) node[below] {$M_{e_1}$};
		\draw (-0.4,-1) node[left] {$A_{t(e_1)}$};
		\fill (-0.95,-0.35) circle (2pt) node[left] {$M_{e_2}$};
		\draw (-0.85,0.45) node[left] {$A_{t(e_2)}$};
		\fill (-0.7,0.68) circle (2pt);
		\draw (0,1)  node[above] {$\dots $};
		\fill (0.88,-0.4) circle (2pt) node[right] {$M_{e_m}$};
		\draw (0.55,-1) node[right]{$A_{t(e_m)}$};
	\end{tikzpicture}
\end{center}

Using the excision theorem for stratified factorization algebra \cite{Ayala2014FactorizationHO}, we have the following result.
\begin{prop}
	There is an equivalence
	\begin{equation*}
		\int_{S^1} \mathcal{A}_\ell =  M_{e_1}\otimes_{A_{t(e_1)}}^{\mathbb{L}}M_{e_2}\otimes_{A_{t(e_2)}}^{\mathbb{L}}\dots \otimes_{A_{t(e_{n-1})}}^{\mathbb{L}}M_{e_n}\otimes_{A_{t(e_n)}}^{\mathbb{L}}.
	\end{equation*}
\end{prop}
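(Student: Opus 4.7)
The plan is to establish the proposition by applying the excision theorem for stratified factorization algebras from \cite{Ayala2014FactorizationHO}, which reduces the computation of $\int_{S^1}\mathcal{A}_\ell$ to gluing the factorization homology of a suitable open cover of $S^1$. I would first fix a decomposition of $S^1$ adapted to the stratification: a collection of open arcs $U_1,\dots,U_m$ such that each $U_i$ contains exactly the marked point labeled by $M_{e_i}$ and has an unmarked subarc on each side labeled by $A_{s(e_i)}$ (on the left) and $A_{t(e_i)}$ (on the right). Between successive neighborhoods $U_i$ and $U_{i+1}$ sits an unmarked open arc $V_i$ entirely labeled by $A_{t(e_i)}=A_{s(e_{i+1})}$, and the indices are cyclic modulo $m$.

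Next I would compute the local factorization homology on each piece. On an unmarked arc $V_i$ (a one-dimensional open interval with no stratification), the stratified factorization algebra is locally constant with value $A_{t(e_i)}$, and the standard identification of $\mathbb{E}_1$-factorization homology on an interval gives $\int_{V_i}\mathcal{A}_\ell \simeq A_{t(e_i)}$ as an $A_{t(e_i)}^e$-module (the two collar directions providing the bimodule structure). On a marked arc $U_i$ containing the single stratum point, unraveling the definition of a stratified $\mathbb{E}_1$-algebra at a point yields exactly the datum of an $A_{s(e_i)}$-$A_{t(e_i)}$-bimodule, and its factorization homology on $U_i$ is $M_{e_i}$, again with the two boundary collars producing the left $A_{s(e_i)}$ and right $A_{t(e_i)}$ actions.

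Now I would invoke the stratified excision theorem to assemble the global factorization homology as the derived tensor product over the intersecting collars:
\begin{equation*}
\int_{S^1}\mathcal{A}_\ell \;\simeq\; \bigotimes_{i=1}^{m}\Bigl(\int_{U_i}\mathcal{A}_\ell\Bigr)\underset{\int_{V_i}\mathcal{A}_\ell}{\otimes^{\mathbb{L}}},
\end{equation*}
where the tensor products are taken cyclically because $S^1$ is closed. Substituting the local computations, each collar intersection $U_i\cap V_i$ contributes a tensor product over $A_{t(e_i)}$, and the product is closed up to form the cyclic pattern
\begin{equation*}
M_{e_1}\otimes_{A_{t(e_1)}}^{\mathbb{L}}M_{e_2}\otimes_{A_{t(e_2)}}^{\mathbb{L}}\cdots\otimes_{A_{t(e_{n-1})}}^{\mathbb{L}}M_{e_n}\otimes_{A_{t(e_n)}}^{\mathbb{L}},
\end{equation*}
where the trailing $\otimes_{A_{t(e_n)}}$ with no factor on the right is precisely the tensor-over-$S^e$ convention introduced in the notation section, encoding the identification of the two boundary copies of $A_{t(e_n)}=A_{s(e_1)}$ when the two ends of the interval are glued to form the circle.

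The main technical point, rather than a genuine obstacle, is the careful bookkeeping of which algebra labels each collar and hence provides each relative tensor product; the cyclic closure that turns an interval's iterated bar construction into the cyclic trace-like expression is exactly what the stratified excision theorem on $S^1$ (as opposed to $[0,1]$) produces, so once the local identifications are in place the statement falls out. A parallel argument on $[0,1]$ with framing data at the endpoints will similarly yield the analogous statement for framed paths $\rho\in\mathrm{fPath}(Q^{\mathrm{fr}})$ used in the preceding section.
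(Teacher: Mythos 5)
Your proposal follows the same route as the paper: both invoke the stratified excision theorem of Ayala--Francis--Tanaka, and your open-cover-and-glue argument with marked arcs $U_i$ carrying the bimodules $M_{e_i}$ and unmarked collars $V_i$ carrying the algebras $A_{t(e_i)}$ is precisely the intended content behind the paper's one-line citation. One small sharpening: since the excision theorem is stated for a collar gluing of two pieces, the cyclic expression $\bigotimes_{i=1}^m(\int_{U_i}\mathcal{A}_\ell)\otimes^{\mathbb{L}}_{\int_{V_i}\mathcal{A}_\ell}$ is best justified by first cutting $S^1$ at a single unmarked point to reduce to the interval computation $\int_{[0,1]}\mathcal{A}_\ell\simeq M_{e_1}\otimes^{\mathbb{L}}_{A_{t(e_1)}}\cdots M_{e_n}$, then applying excision once more to glue the two endpoints over $A_{t(e_n)}=A_{s(e_1)}$; this final step is exactly what produces the trailing $\otimes_{A_{t(e_n)}}$ in the paper's $S^e$-coinvariant notation, as you correctly identify.
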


In the presence of the marked points, only the discrete subgroup $\Z_{\deg \ell} \subset S^1$ acts on the factorization homology $\int_{S^1} \mathcal{A}_\ell$. Taking the coinvariant of this action gives us \ref{cyc_tensor}.

Finally, we consider a path $\rho = (w^+,e_1,\dots,e_n,w^-) \in \mathrm{fPath}$ that ends on framed vertices. This gives us a set of associative algebras $(A_{s(e_1)},A_{s(e_2)},\dots,A_{s(e_n)},A_{t(e_n)})$, a set of $A_{s(e_i)}-A_{t(e_i)}$ bimodules $M_{e_i}$, together with a right $A_{s(e_1)}$ module $M_{w^+}$ and a left $A_{t(e_n)}$ module $M_{w^-}$. This data defines a stratified factorization algebra over the interval $[0,1]$ with $n$ additional marked points.

		\begin{center}
	\begin{tikzpicture}
		\draw (0,0) -- (7,0);
		\fill (0,0) circle (2pt) node[above] {$M_{w^+}$};
		\draw (1,0) node[above] {$A_{s(e_1)}$};
		\fill (2,0) circle (2pt) node[above] {$M_{e_1}$};
		\draw (3.5,0) node[above] {$\cdots$};
		\fill (5,0) circle (2pt) node[above] {$M_{e_n}$};
		\draw (6,0) node[above] {$A_{t(e_n)}$};
		\fill (7,0) circle (2pt) node[above] {$M_{w^-}$};
	\end{tikzpicture}
\end{center}

Again, we use the excision theorem for stratified factorization algebra \cite{Ayala2014FactorizationHO} and find the following result.
\begin{prop}
	There is an equivalence
	\begin{equation*}
		\int_{[0,1]} \mathcal{A}_\rho =   M_{w^+}\otimes^{\mathbb{L}}_{A_{j(w^+)}}M_{e_{1}}\otimes_{A_{t(e_1)}}^{\mathbb{L}} \dots M_{e_n}\otimes_{A_{j(w^-)}}^{\mathbb{L}} M_{w^-} .
	\end{equation*}
\end{prop}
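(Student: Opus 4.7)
The plan is to apply the excision theorem for stratified factorization homology iteratively, exactly as was done for the $S^1$ case in the preceding proposition, but now taking care of the boundary strata which carry the framed modules $M_{w^\pm}$. Concretely, I fix the stratified structure on $[0,1]$ coming from the path $\rho = (w^+, e_1, \dots, e_n, w^-)$: the open strata are open subintervals on which $\mathcal{A}_\rho$ restricts to one of the constant algebras $A_{s(e_1)}, A_{t(e_1)} = A_{s(e_2)}, \dots, A_{t(e_n)}$, the interior marked points carry the bimodules $M_{e_i}$, and the boundary points $0$ and $1$ carry the framed modules $M_{w^+}$ and $M_{w^-}$ respectively.

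The first step is to choose a cover of $[0,1]$ by a chain of subintervals $U_0, U_1, \dots, U_n, U_{n+1}$, where $U_0$ is a half-open neighborhood of $0$ containing only the boundary stratum for $M_{w^+}$ and a small piece of the open stratum labelled by $A_{j(w^+)} = A_{s(e_1)}$, each $U_i$ (for $1 \le i \le n$) is an open interval containing only the marked point for $e_i$ and small pieces of the two adjacent open strata $A_{s(e_i)}$ and $A_{t(e_i)}$, and $U_{n+1}$ is the symmetric half-open neighborhood of $1$ for $M_{w^-}$. The overlaps $U_i \cap U_{i+1}$ are open subintervals lying entirely in a single open stratum.

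The second step is to evaluate each piece. The factorization homology of a contractible open subinterval of a single open stratum is just the algebra on that stratum (the standard computation $\int_{\mathbb{R}} A \simeq A$). The factorization homology of $U_i$ for $1\le i\le n$ containing a single marked bimodule stratum is $M_{e_i}$, regarded as an $(A_{s(e_i)}, A_{t(e_i)})$-bimodule. Finally the factorization homology of $U_0$ is $M_{w^+}$ as a right $A_{j(w^+)}$-module, and of $U_{n+1}$ is $M_{w^-}$ as a left $A_{j(w^-)}$-module; this uses the standard input that on a half-open stratified interval with a module at the boundary, the factorization homology is the module itself.

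By the excision theorem for stratified factorization homology cited from \cite{Ayala2014FactorizationHO}, $\int_{[0,1]}\mathcal{A}_\rho$ is the iterated derived tensor product of these local factorization homologies over the algebras on the overlaps. Matching sources and targets yields exactly
\[
M_{w^+}\otimes^{\mathbb{L}}_{A_{j(w^+)}} M_{e_1}\otimes^{\mathbb{L}}_{A_{t(e_1)}}\cdots\otimes^{\mathbb{L}}_{A_{t(e_{n-1})}} M_{e_n}\otimes^{\mathbb{L}}_{A_{j(w^-)}} M_{w^-},
\]
as required. The only real work is bookkeeping: checking that on each overlap the algebra pulled back from the two neighbors agrees (forced by $t(e_i)=s(e_{i+1})$ along a path) and that the boundary pieces identify with $M_{w^\pm}$ in the derived sense; once excision is taken as a black box, as is customary, there is no further substantive obstacle, and the statement reduces to the same kind of Bar-resolution identification used in the proof of the preceding framed-path proposition.
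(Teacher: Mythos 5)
Your proposal is correct and takes the same route as the paper: the paper simply states this as a consequence of the excision theorem for stratified factorization homology (\cite{Ayala2014FactorizationHO}) without supplying further detail, while you spell out exactly how to apply it. Your cover by a chain of subintervals each containing one marked stratum (boundary modules $M_{w^\pm}$ at the ends, bimodules $M_{e_i}$ in the interior), the local computations $\int_{\mathbb{R}}A\simeq A$ and the module-at-boundary identifications, and the iterated pushout over the overlap algebras are precisely the standard unwinding of excision that the paper leaves implicit, so there is nothing to object to.
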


Combining our previous analysis, we can now reformulate the main Theorem \ref{thm_main1} of this paper using factorization homology
\begin{theorem}\label{thm_main2}
	We have an isomorphism 
	$$
	\begin{aligned}
		&H_{\sbullet}(\bigoplus_{v \in Q_0}\mathfrak{gl}(A_v),\mathrm{Sym}(\bigoplus_{e\in E}\mathcal{M}(M_e)  \bigoplus_{w^-\in W^-} \mathcal{V}(M_{w^-})\bigoplus_{w^+ \in W^+} \mathcal{V}^*(M_{w^+})) )\\
		&\cong \mathrm{Sym} \left(  \bigoplus_{v \in Q_0}\left( \int_{S^1}A_v\right)_{S^1}[1] \oplus  \bigoplus_{\ell \in \mathrm{Cyc}(Q)} \left( \int_{S^1}\mathcal{A}_{\ell}\right) _{\Z_{\deg \ell}}\oplus \bigoplus_{\rho  \in \mathrm{fPath}(Q)} \left( \int_{[0,1]}\mathcal{A}_{\rho}\right) \right).
	\end{aligned}
	$$
\end{theorem}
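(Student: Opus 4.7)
The plan is to deduce Theorem \ref{thm_main2} as an essentially formal consequence of Theorem \ref{thm_main1}, matching the three pieces appearing in the symmetric algebra of the right-hand side of \ref{thm_main1} with the corresponding factorization-homology expressions one at a time.

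First I would address the vertex contribution. By Lurie's identification of the factorization homology of an associative algebra over $S^1$ with Hochschild homology, one has $\int_{S^1} A_v \cong HH_{\sbullet}(A_v)$. The $S^1$-action by rotation induces an action on this factorization homology which, by the result of Ayala–Francis–Rozenblyum, is identified with Connes' cyclic operator on the Hochschild bar complex. Taking homotopy orbits then produces cyclic homology, giving $\left(\int_{S^1} A_v\right)_{S^1} \cong HC_{\sbullet}(A_v)$, which matches the first summand of \ref{main_result}.

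Next I would treat the cyclic contribution $\mathcal{F}_{\ell}$ for $\ell = (e_1,\dots,e_n) \in \mathrm{Cyc}(Q)$. The bimodule/algebra assignment along the marked points of $S^1$ provides a stratified factorization algebra $\mathcal{A}_\ell$ on $S^1$, and applying the excision theorem for stratified factorization algebras (as in \cite{Ayala2014FactorizationHO}) identifies $\int_{S^1}\mathcal{A}_\ell$ with the iterated derived tensor product $M_{e_1}\otimes_{A_{t(e_1)}}^{\mathbb{L}}\cdots\otimes_{A_{t(e_n)}}^{\mathbb{L}}$ appearing in \ref{cyc_tensor}. In the stratified case only the finite subgroup of $S^1$ that permutes the marked points acts, and this subgroup is exactly $\Z_{\deg \ell}$ by the definition of $\deg \ell$ as the largest $d$ making $\ell = \ell_0^d$; taking coinvariants yields $\mathcal{F}_\ell \cong \left(\int_{S^1}\mathcal{A}_\ell\right)_{\Z_{\deg \ell}}$. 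The path case for $\rho = (w^+,e_1,\dots,e_n,w^-) \in \mathrm{fPath}(Q^{\mathrm{fr}})$ is analogous but takes place on the interval $[0,1]$: excision for the stratified factorization algebra $\mathcal{A}_\rho$ on $[0,1]$ identifies $\int_{[0,1]}\mathcal{A}_\rho$ with $M_{w^+}\otimes^{\mathbb{L}}_{A_{j(w^+)}}M_{e_1}\otimes^{\mathbb{L}}_{A_{t(e_1)}}\cdots\otimes^{\mathbb{L}}_{A_{j(w^-)}}M_{w^-}$, which is $\mathcal{F}_\rho$. No circle action intervenes in this case because $[0,1]$ has trivial rotational symmetry, matching the absence of a coinvariant in the path summand of \ref{main_result}.

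Finally, I would substitute these three identifications into the right-hand side of Theorem \ref{thm_main1}, which immediately gives the claimed isomorphism. The main obstacle in this plan is not any computation but the careful bookkeeping of the $\Z_{\deg \ell}$-action: one must check that the cyclic permutation of the tensor factors appearing in the definition of $\mathcal{F}_\ell$ (induced by the relabeling of a pointed cycle representative) coincides with the residual finite rotational symmetry of the stratified circle after excision is applied. Both statements are manifest after the propositions computing $\int_{S^1}\mathcal{A}_\ell$ and $\int_{[0,1]}\mathcal{A}_\rho$ stated just above the theorem, so once these are in hand, Theorem \ref{thm_main2} follows directly by rewriting the summands of Theorem \ref{thm_main1}.
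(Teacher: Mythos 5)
Your proposal matches the paper's own derivation essentially exactly: Theorem \ref{thm_main2} is obtained from Theorem \ref{thm_main1} by substituting the three factorization-homology identifications (vertex, cycle, and framed-path summands) established in the propositions immediately preceding it, which is precisely what the paper does. The only minor slip is an attribution — the paper cites Ayala, Francis, and Howard (not Rozenblyum) for the identification of the rotational $S^1$-action with Connes' cyclic operator — but this does not affect the argument.
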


\end{document}